\documentclass[10pt]{amsart}

\usepackage{amssymb}
\usepackage{amsmath}
\usepackage[all]{xy}
\usepackage{enumerate}

\linespread{1.3}

\hoffset=0in 
\voffset=0in
\oddsidemargin=0in
\evensidemargin=0in
\topmargin=0.3in 
\headsep=0.15in 
\headheight=8pt
\textwidth=6.5in
\textheight=8.5in

\newtheorem{theorem}[subsection]{Theorem}
\newtheorem*{ntheorem}{Theorem}
\newtheorem{cor}[subsection]{Corollary}

\newtheorem{lemma}[subsection]{Lemma}

\theoremstyle{remark}
  
\newtheorem{rem}[subsection]{Remark}

\theoremstyle{definition}

\DeclareMathOperator{\ord}{ord}
\DeclareMathOperator{\codim}{codim}
\DeclareMathOperator{\BOX}{Box}
\DeclareMathOperator{\Hom}{Hom}
\DeclareMathOperator{\Ext}{Ext}
\DeclareMathOperator{\Spec}{Spec}
\DeclareMathOperator{\Aut}{Aut}

\DeclareMathOperator{\Pic}{Pic}

\DeclareMathOperator{\sft}{sft}
\DeclareMathOperator{\age}{age}

\newcommand{\bSigma}{\mbox{\boldmath$\Sigma$}}
\newcommand{\btau}{\mbox{\boldmath$\tau$}}
\newcommand{\bsigma}{\mbox{\boldmath$\sigma$}}

\newcommand{\bsv}{\mbox{\boldmath$\sigma(v)$}}
\newcommand{\bntau}{\mbox{\boldmath$n\tau$}}
\newcommand{\bnSigma}{\mbox{\boldmath$n\Sigma$}}
\newcommand{\btriangle}{\mbox{\boldmath$\triangle$}}

\begin{document}

\title{Motivic Integration on Toric Stacks}

\author{A. Stapledon}

\address{Department of Mathematics, University of Michigan, Ann Arbor, MI 48109, USA}
\email{astapldn@umich.edu}

\begin{abstract}
We present a decomposition of the space of twisted arcs of a toric stack. As a consequence, 
we give a combinatorial description of the motivic integral  associated to a torus-invariant divisor of a toric stack. 
\end{abstract}

\maketitle

\section{Introduction}

Let $N$ be a lattice of rank $d$ and set $N_{\mathbb{R}} = N \otimes_{\mathbb{Z}} \mathbb{R}$. 
Let $\Sigma$ be a simplicial, rational, $d$-dimensional fan in $N_{\mathbb{R}}$, and
assume that the support $|\Sigma|$ of $\Sigma$ is convex. 
Denote the primitive integer generators of $\Sigma$ by $v_{1}, \ldots, v_{r}$ and fix positive integers $a_{1}, \ldots, a_{r}$.
The data $\bSigma = (N, \Sigma, \{ a_{i}v_{i} \} )$ is called a \emph{stacky fan}. In \cite{BCSOrbifold}, Borisov, Chen and Smith  
associated to $\bSigma$ a smooth Deligne-Mumford stack $\mathcal{X} = \mathcal{X}(\bSigma)$ over $\mathbb{C}$, called a \emph{toric stack}, with coarse moduli space 
the toric variety $X = X(\Sigma)$ (see also \cite{FMNSmooth}). 

To any complex variety $Y$, one can associate its corresponding arc space $J_{\infty}(Y) := \Hom( \Spec \mathbb{C}[[t]], Y)$. 
The geometry of $J_{\infty}(Y)$ encodes a lot of information about the birational geometry of $Y$ and has been the subject of intensive study in recent times (see, for example, 
\cite{ELMContact, EMInversion, MusJet}). 
Kontsevich introduced the theory of \emph{motivic integration} in \cite{KonMotivic}, which assigns a measure to subsets of $J_{\infty}(Y)$ and allows one to compare invariants on birationally equivalent varieties.  This theory has been developed over the last decade by many authors including Denef and Loeser \cite{DLMotivic, DLGerms, DLGeometry, DLMotivic2} and 
Batyrev \cite{BatNon,BDStrong}.

Yasuda extended the theory of arc spaces and motivic integration to Deligne-Mumford stacks in \cite{YasTwisted} and \cite{YasMotivic}.
More specifically, to any Deligne-Mumford stack $\mathcal{Y}$, he associated the space $|\mathcal{J}_{\infty}\mathcal{Y}|$ of \emph{twisted arcs} of $\mathcal{Y}$  and defined an associated measure.  
The goal of this paper is to give an explicit description of the space of twisted arcs of the toric stack $\mathcal{X}$ above. 
We will prove a decomposition of $|\mathcal{J}_{\infty}\mathcal{X}|$ 
which allows us to compute motivic integrals on $\mathcal{X}$ and we will show how these motivic integrals relate to combinatorial invariants from the theory of lattice point enumeration of polyhedral complexes. 

We briefly recall Ishii's decomposition of the arc space $J_{\infty}(X)$ of the toric variety $X$ in \cite{IshArc} (for more details, see Section \ref{arc}).
If $T$ denotes the torus of $X$ with corresponding arc space $J_{\infty}(T)$,  
then $J_{\infty}(T)$ has the structure of a group and acts on $J_{\infty}(X)$. If $D_{1}, \ldots, D_{r}$ denote the $T$-invariant divisors of $X$, then the open subset $J_{\infty}(X)' = J_{\infty}(X) \smallsetminus \cup_{i} J_{\infty}(D_{i})$ is invariant under the action of $J_{\infty}(T)$.
Ishii gave a decomposition of $J_{\infty}(X)'$ into $J_{\infty}(T)$-orbits indexed by $|\Sigma| \cap N$ 
and described the orbit closures. 

Our first main result is a stacky analogue of Ishii's decomposition. 
We describe an action of $J_{\infty}(T)$ on 
$|\mathcal{J}_{\infty}\mathcal{X}|$, which restricts to an action on
$|\mathcal{J}_{\infty}\mathcal{X}|' = |\mathcal{J}_{\infty}\mathcal{X}| \smallsetminus  \cup_{i} |\mathcal{J}_{\infty}\mathcal{D}_{i}|$,
where $\mathcal{D}_{1}, \ldots, \mathcal{D}_{r}$ are the $T$-invariant prime divisors of $\mathcal{X}$ (see Section \ref{arc}). 
We define a twisted arc $\tilde{\gamma}_{v}$ for each element $v$ in $|\Sigma| \cap N$,
and describe the $J_{\infty}(T)$-orbits of $|\mathcal{J}_{\infty}\mathcal{X}|'$ (Theorem \ref{decomp}), as follows.

\begin{ntheorem}
We have a decomposition of $|\mathcal{J}_{\infty}\mathcal{X}|'$ into $J_{\infty}(T)$-orbits 
\begin{equation*}
|\mathcal{J}_{\infty}\mathcal{X}|' = \coprod_{v \in |\Sigma| \cap N} \tilde{\gamma}_{v} \cdot J_{\infty}(T).
\end{equation*}
Moreover, $\tilde{\gamma}_{w} \cdot J_{\infty}(T) \subseteq \overline{\tilde{\gamma}_{v} \cdot J_{\infty}(T)}$ if and only if 
$w - v = \sum_{\rho_{i} \subseteq \sigma} \lambda_{i}a_{i}v_{i}$ for some non-negative integers $\lambda_{i}$ and  
some cone $\sigma$ containing $v$ and $w$. 
\end{ntheorem}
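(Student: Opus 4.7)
The plan is to transcribe Ishii's argument for the classical toric variety into Yasuda's twisted-arc framework, working throughout with the Cox-quotient presentation $\mathcal{X} = [Z/G]$ of the toric stack. Under this presentation a twisted arc of $\mathcal{X}$ is a $G$-equivariant morphism from a twisted formal disc $\mathcal{D}_n = [\Spec \mathbb{C}[[s]]/\mu_n]$ to $Z$, the action of $J_{\infty}(T)$ lifts naturally, and contact orders along the $\mathcal{D}_i$ take values in $\tfrac{1}{n}\mathbb{Z}$ whose image in $N \otimes \mathbb{Q}$ lies in $N$. These three ingredients together reduce both halves of the theorem to an essentially combinatorial bookkeeping exercise on top of Ishii's decomposition.

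To construct $\tilde{\gamma}_v$, I would let $\sigma$ be the cone of $\Sigma$ containing $v$ in its relative interior, write $v = \sum_{\rho_{i} \subseteq \sigma} (m_i/n)\, a_i v_i$ with $n$ the least positive integer clearing denominators, and take the fractional cocharacter $s \mapsto (s^{m_i})_{\rho_{i} \subseteq \sigma}$ (with $s^0 = 1$ on the remaining factors) into the Cox torus. This is $\mu_n$-equivariant modulo $G$ and descends to a representable morphism $\mathcal{D}_n \to \mathcal{X}$; by construction the image avoids each $\mathcal{D}_i$, placing $\tilde{\gamma}_v$ in $|\mathcal{J}_{\infty}\mathcal{X}|'$.

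For the decomposition itself, given $\tilde{\gamma} \in |\mathcal{J}_{\infty}\mathcal{X}|'$ I would project to its coarse arc $\gamma = \pi \circ \tilde{\gamma} \in J_{\infty}(X)'$; Ishii's theorem places $\gamma$ in a unique orbit $\gamma_v \cdot J_{\infty}(T)$. It then suffices to check that the fibre of $|\mathcal{J}_{\infty}\mathcal{X}| \to J_{\infty}X$ over $\gamma_v$ is a single $J_{\infty}(T)$-orbit, which follows because on a smooth toric chart containing $v$ in its interior a twisted arc is determined up to isomorphism by its band and its contact data, both fixed by $v$. Disjointness of the orbits is then automatic: the integer vector of contact orders $\ord(\tilde{\gamma}^{*}\mathcal{D}_i)$ is a $J_{\infty}(T)$-invariant from which $v$ may be recovered. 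For the closure relations, the ``if'' direction is an explicit one-parameter degeneration: given $w - v = \sum_i \lambda_i a_i v_i$ with $\lambda_i \in \mathbb{Z}_{\geq 0}$ and $\rho_i \subseteq \sigma$, I would twist $\tilde{\gamma}_v$ by the element of $J_{\infty}(T)$ acting by $s^{\lambda_i}$ on each Cox factor and pass to the limit, landing on $\tilde{\gamma}_w$; the ``only if'' direction combines Ishii's classical closure relation, applied to the coarse arcs, with the stacky refinement forcing the resulting multipliers to be divisible by $a_i$, which follows from compatibility of the degeneration with the band structure.

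I expect the main technical obstacle to lie in the third paragraph: rigorously identifying the fibre of the coarse-moduli map on twisted-arc spaces with a single $J_{\infty}(T)$-orbit requires a careful local-to-global analysis at each geometric point of $\mathcal{X}'$, using Yasuda's explicit local description of twisted arcs on a smooth Deligne-Mumford stack and tracking how the lifted $J_{\infty}(T)$-action on the Cox presentation interacts with the twist datum. Once this fibre identification is in place, both the bijection with $|\Sigma| \cap N$ and the combinatorial closure criterion should follow cleanly from Ishii's theorem together with the bookkeeping of the band integer $n = n(v)$ attached to each $\tilde{\gamma}_v$.
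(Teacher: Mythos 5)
Your proposal takes a genuinely different route from the paper. The paper never uses the global Cox presentation $[Z/G]$; it works entirely with the local finite-quotient charts $\mathcal{X}(\bsigma) = [\mathbb{A}^d/N(\sigma)]$ and Yasuda's explicit local description (Lemma \ref{lemon}), which gives $|\mathcal{J}_\infty\mathcal{X}(\bsigma)| \cong \coprod_{g\in N(\sigma)} J_\infty^{(g)}(\mathbb{A}^d)/N(\sigma)$. From the explicit description of $J_\infty^{(g)}(\mathbb{A}^d)$ in~(\ref{weagles}), the decomposition, the closure relations, and the equivariant bijection $\tilde{\pi}_\infty$ all fall out by direct computation in each chart. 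You instead propose to go through the coarse moduli map, appealing to Ishii's decomposition of $J_\infty(X)'$ together with a fibre analysis. The paper actually notes (in the Remark following the theorem) that bijectivity of $\tilde{\pi}_\infty$ follows from properness of the coarse moduli morphism via Yasuda's Proposition~3.37, so your strategy for the orbit decomposition is defensible in principle; what you gain is a cleaner conceptual reduction to Ishii, while the paper's chart-by-chart computation has the advantage of being self-contained and of simultaneously producing the formula (\ref{MickyO}) used in the contact-order and measure computations of Sections~\ref{contact}--\ref{motivic}.

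That said, there are two concrete soft spots beyond the one you flag yourself. First, Yasuda's twisted-arc formalism is built around presentations by \emph{finite} group quotients; the Cox group $G$ is a positive-dimensional diagonalizable group, so treating a twisted arc as a ``$G$-equivariant morphism from a twisted disc to $Z$'' is not an application of an existing lemma but would itself require a nontrivial comparison argument. The local charts $[\mathbb{A}^d/N(\sigma)]$ are exactly what Lemma~\ref{lemon} is built for, which is why the paper uses them. Second, and more seriously, the ``only if'' direction of the closure relations cannot be obtained from Ishii plus a bare appeal to $\tilde{\pi}_\infty$: the bijection $\tilde{\pi}_\infty$ is emphatically \emph{not} a homeomorphism (if it were, the stacky closure relations would coincide with Ishii's, which they do not). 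The extra constraint --- that $w-v$ must be a $\mathbb{Z}_{\geq 0}$-combination of the $b_i = a_i v_i$, not merely of the $v_i$ --- comes from the fact that the band ($\{w\}\in\BOX(\bSigma)$, equivalently the choice of $g\in N(\sigma)$) is locally constant on $|\mathcal{J}_\infty\mathcal{X}|$, so orbit closures cannot cross band components. In the paper this is automatic from the clopen decomposition $\coprod_g J_\infty^{(g)}(\mathbb{A}^d)/N(\sigma)$; in your write-up it is compressed into the phrase ``compatibility of the degeneration with the band structure,'' which is precisely the point that needs a proof. Also, your phrasing that ``the fibre of $|\mathcal{J}_\infty\mathcal{X}|\to J_\infty X$ over $\gamma_v$ is a single $J_\infty(T)$-orbit'' is internally inconsistent with equivariance: you want either that the fibre over the \emph{point} $\gamma_v$ is a singleton, or that the preimage of the \emph{orbit} $\gamma_v\cdot J_\infty(T)$ is a single orbit.
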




We now turn our attention to motivic integrals on $\mathcal{X}$. If $\mathcal{E} = \sum_{i = 1}^{r} b_{i} \mathcal{D}_{i}$ is a $T$-invariant $\mathbb{Q}$-divisor on $\mathcal{X}$, then the pair $(\mathcal{X}, \mathcal{E})$ is \emph{Kawamata log terminal}
if $b_{i} < 1$ for $i = 1, \ldots, r$. To any Kawamata log terminal pair $(\mathcal{X}, \mathcal{E})$, Yasuda associated the motivic integral $\Gamma(\mathcal{X},\mathcal{E})$ of $\mathcal{E}$ on $\mathcal{X}$. In the case when each $a_{i} = 1$, Theorem 3.41 in 
\cite{YasMotivic} implies that $\Gamma(\mathcal{X},\mathcal{E})$ is equal to Batyrev's stringy invariant of a corresponding Kawamata log terminal pair of varieties \cite{BatNon}.
We may view $\Gamma(\mathcal{X},\mathcal{E})$ either as an element of a Grothendieck ring of varieties \cite{YasTwisted} or as an element in a ring of convergent stacks \cite{YasMotivic}. For simplicity, 
we will specialise $\Gamma(\mathcal{X},\mathcal{E})$ and view it as a convergent power series in $\mathbb{Z}[(uv)^{1/N}][[ (uv)^{-1/N} ]]$, for some positive integer $N$, and refer the reader to section \ref{motivic} for the relevant details. 

In order to  motivate and state our results, we recall some notions from the theory of lattice point enumeration of polyhedral complexes (see, for example, \cite{BarLattice}). 
Denote by $\psi: |\Sigma| \rightarrow \mathbb{R}$ the piecewise $\mathbb{Q}$-linear function 
satisfying $\psi(a_{i}v_{i}) = 1$ for $i = 1, \ldots ,r$, and  
consider the polyhedral complex
$Q = \{ v \in |\Sigma| \, | \, \psi(v) \leq 1 \}$. If for each positive integer $m$, $f_{Q}(m)$ denotes the number of lattice points 
in $mQ$, then  $f_{Q}(m)$ is a polynomial in $m$ of degree $d$, called the \emph{Ehrhart polynomial} of $Q$.  
The generating series of $f_{Q}(m)$ can be written in the form
\begin{equation*}
1 + \sum_{m \geq 1} f_{Q}(m)t^{m} = \delta_{Q}(t)/ (1 - t)^{d + 1},
\end{equation*}
where $\delta_{Q}(t)$ is a polynomial of degree less than or equal to $d$,
called the \emph{Ehrhart $\delta$-polynomial} of $Q$. 
More generally, if $\mu: |\Sigma| \rightarrow \mathbb{R}_{\geq 0}$ is a piecewise linear function, one can consider the power series 
\[
\delta_{Q}(t;\mu) := (1 - t)^{d + 1}(1 +  \sum_{m \geq 1} \sum_{v \in mQ \cap N} t^{\mu(v) + m}),
\]
so that $\delta_{Q}(t;\mu) = \delta_{Q}(t)$ when $\mu$ is identically zero.
The Ehrhart $\delta$-polynomial of $Q$ and its associated generalisations have been studied extensively over the last forty years by many authors including Stanley \cite{StaHilbert2, StaDecompositions, StaHilbert1,  StaMonotonicity} and Hibi \cite{HibSome, HibEhrhart,  HibLower, HibStar}.  

Motivated precisely by computations of motivic integrals on toric stacks, the following variant of the above definition was recently introduced in \cite{YoWeightI}.
If  $\lambda: |\Sigma| \rightarrow \mathbb{R}$ is a piecewise $\mathbb{Q}$-linear function satisfying $\lambda(a_{i}v_{i}) > - 1$ for $i = 1, \ldots, r$, then
the \emph{weighted $\delta$-vector} $\delta^{\lambda}(t)$  is defined by
\[
\delta^{\lambda}(t) = (1  - t)^{d + 1}(1 +  \sum_{m \geq 1} \sum_{v \in mQ \cap N} t^{\psi(v) - \lceil \psi(v) \rceil + \lambda(v) + m}).
\]
Observe that if $\lambda$ is a piecewise linear function, then the coefficient of $t^{i}$ in $\delta_{Q}(t; \lambda)$ is equal to the sum of 
the coefficients of $t^{j}$ in $\delta^{\lambda}(t)$ for $i - 1 <  j \leq i$. The weighted $\delta$-vector satisfies the following symmetry property. 

\begin{ntheorem}[\cite{YoWeightI}]
With the notation above, the expression $\delta^{\lambda}(t)$ is a rational function in $\mathbb{Q}(t^{1/N})$, for some positive integer $N$.  
If $\Sigma$ is a complete fan, 
then 
\begin{equation*}
\delta^{\lambda}(t) = t^{d} \delta^{\lambda}(t^{-1}).
\end{equation*}
\end{ntheorem}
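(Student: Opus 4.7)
The plan is to recast $\delta^{\lambda}(t)$ as a single lattice-point sum, then deduce the symmetry by applying an Ehrhart--Macdonald-style reciprocity cone-by-cone and reassembling via inclusion--exclusion. For the first step, interchange the order of summation: for nonzero $v \in |\Sigma| \cap N$, one has $v \in mQ$ iff $m \geq \lceil \psi(v) \rceil$, so summing the geometric series in $m$ (and absorbing the constant $1$ as the $v=0$ contribution) collapses the exponent $\psi(v) - \lceil\psi(v)\rceil + \lambda(v) + m$ to give
\begin{equation*}
\delta^{\lambda}(t) \;=\; (1 - t)^{d} \sum_{v \in |\Sigma| \cap N} t^{\phi(v)}, \qquad \phi := \psi + \lambda.
\end{equation*}
Because $\phi(v_{i}) = (1 + \lambda(a_{i}v_{i}))/a_{i} > 0$, decomposing $|\Sigma| \cap N$ by relative interiors of the simplicial cones of $\Sigma$ and using the half-open box parametrization
\begin{equation*}
\sum_{v \in \mathrm{relint}(\tau) \cap N} t^{\phi(v)} \;=\; \frac{\sum_{b \in \BOX^{\circ}(\tau)} t^{\phi(b)}}{\prod_{l}(1 - t^{\phi(v_{i_{l}})})}
\end{equation*}
writes $\delta^{\lambda}(t)$ as a sum of rational functions in $\mathbb{Q}(t^{1/N})$, for $N$ a common denominator of the $\phi(v_{i})$; this proves the rationality assertion.

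Now assume $\Sigma$ is complete. Since $t^{d}(1 - t^{-1})^{d} = (-1)^{d}(1 - t)^{d}$, the desired symmetry reduces to the identity
\begin{equation*}
\sum_{v \in N} t^{\phi(v)} \;=\; (-1)^{d} \sum_{v \in N} t^{-\phi(v)}.
\end{equation*}
The heart of the argument is the cone-level reciprocity: for each simplicial cone $\tau$ with primitive generators $v_{i_{1}}, \ldots, v_{i_{k}}$,
\begin{equation*}
\sum_{v \in \mathrm{relint}(\tau) \cap N} t^{\phi(v)} \;=\; (-1)^{\dim \tau} \sum_{v \in \tau \cap N} t^{-\phi(v)}.
\end{equation*}
This is the classical Ehrhart--Macdonald reciprocity: the involution $b \mapsto v_{i_{1}} + \cdots + v_{i_{k}} - b$ interchanges $\BOX(\tau)$ with $\BOX^{\circ}(\tau)$, and the identity $(1 - t^{-a})^{-1} = -t^{a}(1 - t^{a})^{-1}$ absorbs the discrepancy into the sign $(-1)^{\dim \tau}$.

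To finish, partition $N$ by relative interiors, apply the cone-level identity termwise, and re-expand $\sum_{v \in \tau \cap N} t^{-\phi(v)} = \sum_{\sigma \subseteq \tau} \sum_{v \in \mathrm{relint}(\sigma) \cap N} t^{-\phi(v)}$. Swapping the order of summation yields
\begin{equation*}
\sum_{v \in N} t^{\phi(v)} \;=\; \sum_{\sigma \in \Sigma} \Bigl(\sum_{\tau \supseteq \sigma} (-1)^{\dim \tau}\Bigr) \sum_{v \in \mathrm{relint}(\sigma) \cap N} t^{-\phi(v)}.
\end{equation*}
Completeness of $\Sigma$ enters precisely at this point: the star of $\sigma$ is a complete fan of dimension $d - \dim\sigma$ in $N_{\mathbb{R}}/\mathrm{span}(\sigma)$, and a topological Euler-characteristic computation gives $\sum_{\tau'}(-1)^{\dim \tau'} = (-1)^{n}$ for any complete fan in $\mathbb{R}^{n}$; hence $\sum_{\tau \supseteq \sigma}(-1)^{\dim \tau} = (-1)^{d}$ independent of $\sigma$, and the whole sum collapses to $(-1)^{d}\sum_{v \in N} t^{-\phi(v)}$. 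The main subtlety I expect is the careful bookkeeping of rational-function versus formal-series interpretations across the substitution $t \leftrightarrow t^{-1}$, together with verifying the Euler-characteristic identity for complete fans.
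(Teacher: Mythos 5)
Your proof is correct. A few points of comparison. This paper does not actually prove Theorem~\ref{stu}; it cites Corollary~2.13 of \cite{YoWeightI} and quotes the intermediate formulas (\ref{ditch}), (\ref{mccoy}), (\ref{blue}) from there, which express $\delta^{\lambda}(t)$ as $\sum_{\tau}h_{\tau}^{\lambda}(t)\sum_{v\in\BOX(\btau)}t^{\psi(v)+\lambda(v)}\prod_{\rho_i\subseteq\tau}(t-1)/(t^{\lambda(b_i)+1}-1)$. That suggests the cited proof proceeds by establishing this $h$-vector-style decomposition over box elements of each cone and then deducing the palindromy from the Dehn--Sommerville symmetry of the link $h$-vectors $h_{\tau}^{\lambda}$ (for $\Sigma$ complete) together with the involution $\iota$ on $\BOX(\btau)$. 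Your route is genuinely different: you first collapse the generating series to the single lattice-point sum $(1-t)^{d}\sum_{v\in|\Sigma|\cap N}t^{\phi(v)}$ with $\phi=\psi+\lambda$ (the reduction $1+\sum_{m\geq 1}\sum_{v\in mQ}t^{\ldots}=(1-t)^{-1}\sum_{v}t^{\phi(v)}$ is a neat simplification, correct once one notes $v\in mQ\Leftrightarrow m\geq\lceil\psi(v)\rceil$ and that the constant $1$ is absorbed with the $v=0$ tail), then apply Stanley reciprocity on each simplicial cone, and finally reassemble via the Euler-characteristic identity $\sum_{\tau\supseteq\sigma}(-1)^{\dim\tau}=(-1)^{d}$, which is exactly where completeness enters. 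This avoids the $h$-vector bookkeeping entirely. Two small remarks: your rationality argument only needs $|\Sigma|$ convex, consistent with the running hypotheses here; and, as you flag, the $t\leftrightarrow t^{-1}$ manipulation with rational exponents should be read as an identity of rational functions in $t^{1/N}$ rather than of formal power series, where it is valid.
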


The following result (Theorem \ref{tropo}) may be viewed as a geometric realisation of weighted $\delta$-vectors.


\begin{ntheorem}
Consider a Kawamata log terminal pair $(\mathcal{X}, \mathcal{E})$, where $\mathcal{E}$ is a $T$-invariant $\mathbb{Q}$-divisor on the toric stack $\mathcal{X}$. 
There exists a corresponding piecewise $\mathbb{Q}$-linear function $\lambda: |\Sigma| \rightarrow \mathbb{R}$ satisfying $\lambda(a_{i}v_{i}) > - 1$ for $i = 1, \ldots, r$, such that
\begin{equation*}
\Gamma(\mathcal{X}, \mathcal{E}) = (uv)^{d} \delta^{\lambda}((uv)^{-1}).
\end{equation*}
In particular, $\Gamma(\mathcal{X}, \mathcal{E})$  is a rational function in $\mathbb{Q}(t^{1/N})$, for some positive integer $N$.  
If $\Sigma$ is a complete fan, then
\begin{equation*}
\Gamma(\mathcal{X}, \mathcal{E})(u,v) = (uv)^{d}  \Gamma(\mathcal{X}, \mathcal{E})(u^{-1}, v^{-1}) =  \delta^{\lambda}(uv).
\end{equation*}
Moreover, every weighted $\delta$-vector has the form $\delta^{\lambda}(uv) = (uv)^{d}\Gamma(\mathcal{X}, \mathcal{E})(u^{-1}, v^{-1})$, for such a
pair $(\mathcal{X}, \mathcal{E})$.
\end{ntheorem}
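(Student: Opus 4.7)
The plan is to translate the motivic integral on $\mathcal{X}$ into a lattice-point sum over $|\Sigma|\cap N$ via Theorem~\ref{decomp}, and then to massage it into the Ehrhart-style generating function defining $\delta^{\lambda}$.

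First I set up the dictionary between torus-invariant $\mathbb{Q}$-divisors and piecewise $\mathbb{Q}$-linear functions. Given $\mathcal{E}=\sum b_{i}\mathcal{D}_{i}$, I let $\lambda:|\Sigma|\to\mathbb{R}$ be the unique function that is linear on each cone of $\Sigma$ and satisfies $\lambda(a_{i}v_{i})=-b_{i}$. The Kawamata log terminal hypothesis $b_{i}<1$ is then precisely $\lambda(a_{i}v_{i})>-1$, and every admissible $\lambda$ is realised this way, which immediately gives the surjectivity assertion at the end of the statement.

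Second, I apply Theorem~\ref{decomp} to write
\[
\Gamma(\mathcal{X},\mathcal{E}) \;=\; \sum_{v\in|\Sigma|\cap N} C_{v},
\]
where $C_{v}$ is the contribution of the $J_{\infty}(T)$-orbit of $\tilde{\gamma}_{v}$, after first observing that $\bigcup_{i}|\mathcal{J}_{\infty}\mathcal{D}_{i}|$ is null for Yasuda's measure. For $v=\sum_{j}\lambda_{j}a_{i_{j}}v_{i_{j}}$ in the minimal cone containing it, I compute $C_{v}$ combinatorially: the order of vanishing $\ord_{\mathcal{D}_{i}}(\tilde{\gamma}_{v})$ equals the coefficient $\lambda_{j}$ when $v_{i}=v_{i_{j}}$ (and $0$ otherwise), so $\ord_{\mathcal{E}}(\tilde{\gamma}_{v})=-\lambda(v)$; the shift (age) $\sft(\tilde{\gamma}_{v})$ equals the fractional correction $\lceil\psi(v)\rceil-\psi(v)$ coming from the cyclic stabilizer at the origin of the twisted arc; and the orbit measure is the Ishii-type expression for the toric variety, with $\psi(v)$ replaced by $\lceil\psi(v)\rceil$, reflecting the fact that representable twisted arcs have integral contact orders along the stacky divisors.

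Third, I substitute $(uv)$ for the Lefschetz motive and regroup the lattice sum by $m=\lceil\psi(v)\rceil$. Each $v\neq 0$ then contributes to precisely the terms $m\geq\lceil\psi(v)\rceil$ in the defining sum for $\delta^{\lambda}$, and the geometric series in $(uv)^{-1}$ telescopes to produce the required factor $\tfrac{1}{1-(uv)^{-1}}$; collecting these matches $(uv)^{d}\delta^{\lambda}((uv)^{-1})$. The rationality of $\Gamma(\mathcal{X},\mathcal{E})$ in $\mathbb{Q}(t^{1/N})$ is inherited from that of $\delta^{\lambda}$, and when $\Sigma$ is complete the identities $\Gamma(\mathcal{X},\mathcal{E})(u,v)=(uv)^{d}\Gamma(\mathcal{X},\mathcal{E})(u^{-1},v^{-1})=\delta^{\lambda}(uv)$ follow immediately from the functional equation $\delta^{\lambda}(t)=t^{d}\delta^{\lambda}(t^{-1})$ of the preceding theorem.

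The main obstacle is the orbit-measure computation at the end of the second paragraph: extracting from Yasuda's construction of the measure on $|\mathcal{J}_{\infty}\mathcal{X}|$ the exact exponent of $(uv)$ attached to each orbit $\tilde{\gamma}_{v}\cdot J_{\infty}(T)$, and cleanly splitting that exponent into contributions from the shift function, the transverse dimension of the orbit, and the divisor $\mathcal{E}$. Theorem~\ref{decomp} identifies the orbits set-theoretically, but it is the careful bookkeeping with $\lceil\psi(v)\rceil$ and the fractional part $\psi(v)-\lceil\psi(v)\rceil$ that forces the weighted $\delta$-vector to take the precise form recorded in \cite{YoWeightI}.
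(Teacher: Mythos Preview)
Your overall architecture matches the paper's: decompose $|\mathcal{J}_{\infty}\mathcal{X}|'$ into $J_{\infty}(T)$-orbits via Theorem~\ref{decomp}, compute each orbit's contribution, and sum. However, two of your intermediate claims are wrong. The shift function is \emph{not} $\lceil\psi(v)\rceil-\psi(v)$: by definition it is the age of $g^{-1}$, which the paper computes as $\dim\sigma(\{v\})-\psi(\{v\})$. For instance, take $\{v\}=\tfrac{1}{10}(b_{1}+b_{2}+b_{3})$ in a $3$-dimensional cone; then $\psi(\{v\})=3/10$, so your formula gives $7/10$ while the correct value is $27/10$. Likewise, the orbit measure is not obtained by replacing $\psi(v)$ by $\lceil\psi(v)\rceil$ in an Ishii-type formula; the exponent is $-\psi(v)+\psi(\{v\})-\dim\sigma(\{v\})$ (Lemma~\ref{orbitz}), and your justification via ``integral contact orders along the stacky divisors'' is also off, since $\ord\mathcal{D}_{i}(\tilde{\gamma}_{v})$ is the rational number $v_{i}$ (Lemma~\ref{order}).

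Your two errors happen to cancel: with the correct formulas one still obtains
\[
\Gamma(\mathcal{X},\mathcal{E})=(uv-1)^{d}\sum_{w\in|\Sigma|\cap N}(uv)^{-\psi(w)-\lambda(w)},
\]
which is exactly what the paper derives. From here your route diverges from the paper's and is in fact more direct. The paper matches this expression against the closed formula~(\ref{blue}) for $\delta^{\lambda}(t^{-1})$, imported from \cite{YoWeightI}, via the cone-by-cone decomposition $w=\{w\}+w'+\sum_{\rho_{i}\subseteq\sigma(w)\setminus\sigma(\{w\})}b_{i}$ and identity~(\ref{ditch}). Your idea---regroup by $m\geq\lceil\psi(v)\rceil$ and telescope the geometric series---amounts to the one-line identity $\delta^{\lambda}(t)=(1-t)^{d}\sum_{v}t^{\psi(v)+\lambda(v)}$, which follows immediately from the defining series since $v\in mQ$ iff $m\geq\lceil\psi(v)\rceil$; this avoids any appeal to~(\ref{mccoy}) or~(\ref{blue}). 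So: fix the shift and measure computations to agree with Lemmas~\ref{order} and~\ref{orbitz}, and your argument goes through, arguably more cleanly than the paper's at the matching step.
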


In the case when $\mathcal{E} = 0$, the invariant $\Gamma(\mathcal{X}, 0)$ is a polynomial in $\mathbb{Z}[ (uv)^{1/N} ]$, for some positive integer $N$,
and the coefficient of $(uv)^{j}$ is equal to the  dimension of the $2j^{\textrm{th}}$ orbifold cohomology group of $\mathcal{X}$ with compact support  \cite{YasTwisted}. It follows from Poincar\'e duality for orbifold cohomology \cite{CRNew} and the above discussion that 
the coefficients of the Ehrhart $\delta$-polynomial of $Q$ are sums of dimensions of orbifold cohomology groups of $\mathcal{X}$. A detailed discussion of this result is provided in \cite{YoWeightI}. 

We conclude the introduction with an outline of the contents of the paper. 
In Section \ref{tstack}, we recall Borisov, Chen and Smith's notion of a toric stack and give an explicit description of the local construction. We review Yasuda's theory of twisted jets in Section \ref{jet} and establish a concrete description of these spaces in the toric case. We use these results in Section \ref{arc} to prove our decomposition of the space of twisted arcs of a toric stack. 
In Section \ref{contact}, we compute the contact order of a twisted arc along a torus-invariant divisor and, in Section \ref{motivic}, we use the results of the previous sections to compute certain motivic integrals on toric stacks. Finally, in Section \ref{transform}, we interpret Yasuda's change of variables formula in the toric case to give a geometric proof of a combinatorial result in \cite{YoWeightI}. 




\medskip



The author would like to thank his advisor Mircea Musta\c t\v a for all his help.   
He would also like to thank Bill Fulton, Sam Payne and Kevin Tucker for some useful discussions. 
The author was supported by
Mircea Musta\c t\v a's Packard Fellowship and
by an Eleanor Sophia Wood
travelling scholarship from the University of Sydney.

\section{Toric Stacks}\label{tstack}

We fix the following notation throughout the paper. 
Let $N$ be a lattice of rank $d$ and set $N_{\mathbb{R}} = N \otimes_{\mathbb{Z}} \mathbb{R}$. 
Let $\Sigma$ be a simplicial, rational, $d$-dimensional fan in $N_{\mathbb{R}}$, and let $X = X(\Sigma)$ denote the corresponding toric variety with torus $T$. 
We assume that the support $|\Sigma|$ of $\Sigma$ in $N_{\mathbb{R}}$ is convex.  
Let $\rho_{1}, \ldots, \rho_{r}$ denote 
the rays of $\Sigma$, with primitive integer generators $v_{1}, \ldots, v_{r}$ in $N$ and corresponding $T$-invariant divisors $D_{1}, \ldots, D_{r}$. 
Fix elements $b_{1}, \ldots , b_{r}$ in $N$ such that $b_{i} = a_{i}v_{i}$ for some positive integer 
$a_{i}$. 
The data $\bSigma = (N, \Sigma, \{ b_{i} \} )$ is called a \emph{stacky fan}. 
We denote by $\psi: |\Sigma| \rightarrow \mathbb{R}$ the function that is $\mathbb{Q}$-linear on each cone of 
$\Sigma$ and satisfies $\psi(b_{i}) = 1$ for $i = 1, \ldots ,r$. 
For each non-zero cone $\tau$ of $\Sigma$, set 
\begin{equation*}\label{mcbox}
\BOX(\btau) = \{ v \in N \mid  v = \sum_{\rho_{i} \subseteq \tau} q_{i}b_{i} \textrm{  for some  }
0 < q_{i} < 1 \}.
\end{equation*}
We set $\BOX(\mbox{\boldmath$\{ 0 \}$}) = \{0\}$ and $\BOX(\bSigma) = \cup_{\tau \in \Sigma} \BOX(\btau)$.
We will always work over $\mathbb{C}$ and will often identify schemes with their $\mathbb{C}$-valued points. 

Associated to a stacky fan, there is a smooth Deligne-Mumford toric stack $\mathcal{X} = \mathcal{X}(\bSigma)$ over $\mathbb{C}$ with 
coarse moduli space $X$   \cite{BCSOrbifold}. 
Each cone $\sigma$ in $\Sigma$ corresponds to an open substack $\mathcal{X}(\bsigma)$ of $\mathcal{X}$. 
We identify 
$\mathcal{X}(\mathbf{ \{ 0 \}})$ with the torus $T$ of $X$. 
The open substacks $\{ \mathcal{X}(\bsigma) \, | \, \dim \sigma = d \}$ give an open covering of $\mathcal{X}$. 
 We will give an explicit description of $\mathcal{X}(\bsigma)$, for a fixed  cone $\sigma$ of dimension $d$. 

If $N_{\sigma}$ denotes the sublattice of $N$ generated by 
$\{ b_{i} \, | \, \rho_{i} \subseteq \sigma \}$, 
then $N(\sigma) = N/N_{\sigma}$ is a finite group with elements in bijective correspondence with 
$\coprod_{\tau \subseteq \sigma}\BOX(\btau)$. 
Let $M_{\sigma}$ be the dual lattice of $N_{\sigma}$ and let $M$ be the dual lattice of $N$. 
If $\sigma'$ denotes the cone in $N_{\sigma}$ generated by $\{ b_{i} \, | \, \rho_{i} \subseteq \sigma \}$, with corresponding 
dual cone $(\sigma')^{\vee}$ in $M_{\sigma}$, then
we will make the identifications 
\begin{equation*}
\Spec \mathbb{C}[(\sigma')^{\vee} \cap M_{\sigma}] \cong \mathbb{A}^{d}, \: \Hom_{\mathbb{Z}}(M_{\sigma}, \mathbb{C}^{*}) \cong (\mathbb{C}^{*})^{d}.
\end{equation*}
If we regard $\mathbb{Q}/\mathbb{Z}$ as a subgroup of $\mathbb{C}^{*}$ by sending $p$ to $\exp (2 \pi \sqrt{-1}p)$, then we have a natural isomorphism
\begin{equation}\label{eqnZ}
N(\sigma) \cong \Hom_{\mathbb{Z}}( \Hom_{\mathbb{Z}}(N(\sigma), \mathbb{Q}/\mathbb{Z}), \mathbb{C}^{*}) = 
\Hom_{\mathbb{Z}}( \Ext_{\mathbb{Z}}^{1}(N(\sigma), \mathbb{Z}), \mathbb{C}^{*}).
\end{equation}
We apply the functor $\Hom_{\mathbb{Z}}( \, \: , \mathbb{Z})$ to  the exact sequence
\begin{equation*}
0 \rightarrow N_{\sigma} \rightarrow N \rightarrow N(\sigma) \rightarrow 0,
\end{equation*} 
to get
\begin{equation*}
0 \rightarrow M \rightarrow M_{\sigma} \rightarrow \Ext_{\mathbb{Z}}^{1}(N(\sigma),\mathbb{Z}) \rightarrow 0.
\end{equation*}
Applying $\Hom( \, \: , \mathbb{C^{*}})$ and the natural isomorphism  (\ref{eqnZ}), gives an injection
$N(\sigma) \rightarrow (\mathbb{C}^{*})^{d}$.
The natural action of $(\mathbb{C}^{*})^{d}$ on $\mathbb{A}^{d}$ induces an action of $N(\sigma)$ on $\mathbb{A}^{d}$. We identify $\mathcal{X}(\bsigma)$ 
with the quotient stack
$[ \mathbb{A}^{d} / N(\sigma)]$, with  corresponding coarse moduli space the open subscheme $U_{\sigma} = \mathbb{A}^{d} / N(\sigma)$ of $X$ \cite{BCSOrbifold}.

Consider an element $g$ in $N(\sigma)$ of order $l$ 
corresponding to $v$ in $\BOX(\bsv)$, where $\sigma(v)$ denotes the cone containing 
$v$ in its relative interior. We can write
$v = \sum_{i = 1}^{d} q_{i} b_{i}$,
for some $0 \leq q_{i} < 1$. Note that $q_{i} \neq 0$ if and only if $\rho_{i} \subseteq \sigma(v)$. 
If $x_{1}, \ldots, x_{d}$ are the coordinates on $\mathbb{A}^{d} = \Spec \mathbb{C}[(\sigma')^{\vee} \cap M_{\sigma}]$ 
and $\zeta_{l} = \exp(2\pi \sqrt{-1}/l )$, then one can verify that the action of $N(\sigma)$ on $\mathbb{A}^{d}$ is given by
\begin{equation}\label{actions}
g \cdot (x_{1}, \ldots, x_{d}) = (\zeta_{l}^{\, lq_{1}}x_{1}, \ldots , \zeta_{l}^{\,lq_{d}}x_{d}), 
\end{equation}
and the \emph{age} of $g$ (see Subsection 7.1 \cite{AGVAlgebraic}) is equal to
\begin{equation}\label{age}
\age(g) = (1/l) \sum_{i = 1}^{d} lq_{i}  = \psi(v).
\end{equation}

\section{Twisted Jets of Toric Stacks}\label{jet}

We use the discussion  of twisted jets of Deligne-Mumford stacks in  \cite{YasMotivic}
to give an explicit description of the toric case. 
More specifically, for any non-negative integer $n$, we describe the stack of twisted $n$-jets $\mathcal{J}_{n}\mathcal{X}$ of the toric stack $\mathcal{X}$. 


Fix an affine scheme $S = \Spec R$ over $\mathbb{C}$ and
let $D_{n,S}$ denote the affine scheme $\Spec R[t]/(t^{n + 1})$. If we fix a positive integer $l$ 
and consider the group $\mu_{l}$ of $l$th roots of unity  with generator $\zeta_{l} = \exp (2 \pi \sqrt{-1} /l)$,  then $\mu_{l}$ 
acts on $D_{nl,S}$ via the morphism
$p: D_{nl,S} \times \mu_{l} \rightarrow D_{nl,S}$, 
corresponding to the ring homomorphism $R[t]/(t^{nl + 1}) \rightarrow R[t]/(t^{nl + 1}) \otimes \mathbb{C}[x]/(x^{l} - 1), t \mapsto t \otimes x$. That is, $\mu_{l}$
acts on $D_{nl,S}$ by scaling $t$. 
If $\mathcal{D}_{n,S}^{l}$ denotes  the quotient stack $[D_{nl,S} / \mu_{l} ]$, then
we have morphisms
\begin{equation*}
D_{nl,S} \stackrel{\pi}{\rightarrow} \mathcal{D}_{n,S}^{l} \rightarrow D_{n,S},
\end{equation*}
such that $\pi$ is an atlas for  $\mathcal{D}_{n,S}^{l}$ and $D_{n,S}$ is the coarse moduli space of $\mathcal{D}_{n,S}^{l}$. The  composition of the two maps is the quotient of 
$D_{nl,S}$ by $\mu_{l}$ and corresponds to the ring homomorphism $R[t]/(t^{n + 1}) \rightarrow R[t]/(t^{nl + 1})$, $t \mapsto t^{l}$. 
The atlas $\pi$ corresponds to the object $\alpha$ of $\mathcal{D}_{n,S}^{l}$ over $D_{nl,S}$
\begin{equation*}\label{eqnat}
\xymatrix{ D_{nl,S} \times \mu_{l} \ar[d]^{pr_{1}} \ar[r]^p & D_{nl,S} \\
D_{nl,S} &  }
\end{equation*}
and every object in $\mathcal{D}_{n,S}^{l}$ is locally a pullback of $\alpha$. Consider the automorphism 
\begin{equation*}
\theta = \zeta_{l} \times \zeta_{l}^{-1}: D_{nl,S} \times \mu_{l} \rightarrow D_{nl,S} \times \mu_{l}
\end{equation*}
over $\zeta_{l}: D_{nl,S} \rightarrow D_{nl,S}$. Every automorphism of $\alpha$ is a power of $\theta$ and hence every object and automorphism in 
$\mathcal{D}_{n,S}^{l}$ is locally determined
by a pullback of $\alpha$ and a power of $\theta$.

A \emph{twisted $n$-jet of order $l$} of $\mathcal{X}$ over $S$ 
is a representable morphism 
$\mathcal{D}_{n,S}^{l} \rightarrow \mathcal{X}$. 
By the above discussion, a twisted jet is determined by the images of $\alpha$ and $\theta$.
Yasuda defined the stack $\mathcal{J}_{n}^{l}\mathcal{X}$ of twisted $n$-jets of order $l$ of $\mathcal{X}$. 
An object of $\mathcal{J}_{n}^{l}\mathcal{X}$ over $S$ is a twisted $n$-jet $\gamma: \mathcal{D}_{n,S}^{l} \rightarrow \mathcal{X}$ of order $l$. Suppose 
$\gamma': \mathcal{D}_{n,T}^{l} \rightarrow \mathcal{X}$ is another twisted $n$-jet of order $l$. If $f: S \rightarrow T$ is a morphism, there is an induced morphism 
$f': \mathcal{D}_{n,S}^{l} \rightarrow \mathcal{D}_{n,T}^{l}$.
A morphism in $\mathcal{J}_{n}^{l}\mathcal{X}$ from $\gamma$ to $\gamma'$ over 
$f: S \rightarrow T$ is a 2-morphism from $\gamma$ to $\gamma' \circ f'$. The stack $\mathcal{J}_{n}\mathcal{X}$ of twisted $n$-jets is the disjoint union of the stacks 
$\mathcal{J}_{n}^{l}\mathcal{X}$ as $l$ varies over the positive integers. Both $\mathcal{J}_{n}\mathcal{X}$ and the $\mathcal{J}_{n}^{l}\mathcal{X}$ are smooth Deligne-Mumford stacks
(Theorem 2.9 \cite{YasMotivic}). The stack $\mathcal{J}_{0}\mathcal{X}$ is identified with the \emph{inertia stack} $\mathcal{I}(\mathcal{X})$ of $\mathcal{X}$. That is,  an object of 
$\mathcal{J}_{0}\mathcal{X}$ over $S$ is determined by a pair $(x, \alpha)$, where $x$ is an object of $\mathcal{X}$ over $S$ and $\alpha$ is an automorphism of $x$. 
We identify the stack $\mathcal{J}_{0}^{1}\mathcal{X}$  with $\mathcal{X}$. 
For any $m \geq n$ and for each $l > 0$, the truncation map $R[t]/(t^{ml + 1}) \rightarrow R[t]/(t^{nl + 1})$ 
induces a morphism $\mathcal{D}_{n,S}^{l} \rightarrow \mathcal{D}_{m,S}^{l}$.
Via composition, we get a natural affine morphism (Theorem 2.9 \cite{YasMotivic})
\begin{equation*}
\pi^{m}_{n} : \mathcal{J}_{m}\mathcal{X} \rightarrow \mathcal{J}_{n}\mathcal{X}. 
\end{equation*}
The projective system $\{  \mathcal{J}_{n} \mathcal{X} \}_{n}$ has a projective limit with a projection morphism (p15 \cite{YasMotivic})
\begin{equation*}
\mathcal{J}_{\infty}\mathcal{X} = \lim_{\leftarrow} \mathcal{J}_{n}\mathcal{X}.
\end{equation*}
\begin{equation*}
\pi_{n}: \mathcal{J}_{\infty}\mathcal{X} \rightarrow  \mathcal{J}_{n}\mathcal{X}.
\end{equation*}

\begin{rem}
The open covering $\{ \mathcal{X}(\bsigma) \, | \, \dim \sigma = d \}$ of $\mathcal{X}$ induces an open covering 
$\{ \mathcal{J}_{n}\mathcal{X}(\bsigma) \, | \, \dim \sigma = d \}$ of $\mathcal{J}_{n}\mathcal{X}$, for $0 \leq n \leq \infty$. 
\end{rem}

Recall that if $\mathcal{Y}$ is a stack over $\mathbb{C}$, then we can consider the set of \emph{points} $|\mathcal{Y}|$ of $\mathcal{Y}$ over $\mathbb{C}$ \cite{LMBChamps}. 
Its elements are equivalence 
classes of morphisms from $\Spec \mathbb{C}$ to $\mathcal{Y}$. Two morphisms 
$\psi, \psi': \Spec \mathbb{C} \rightarrow \mathcal{Y}$ are equivalent if there is a $2$-morphism from $\psi$ to $\psi'$. 
It has a Zariski topology;
for every open substack $\mathcal{Y'}$ of $\mathcal{Y}$, $|\mathcal{Y}'|$ is an open subset of $|\mathcal{Y}|$. If $\mathcal{Y}$ has a coarse moduli space $Y$, then 
$|\mathcal{Y}|$ is homeomorphic to $Y(\mathbb{C})$. We will often identify $Y$ with $Y(\mathbb{C})$.

Let $D_{\infty,\mathbb{C}} = \Spec \mathbb{C}[[t]]$ and $\mathcal{D}_{\infty,\mathbb{C}}^{l} = [D_{\infty,\mathbb{C}}/\mu_{l}]$. A \emph{twisted arc} of order $l$ of 
$\mathcal{X}$ over $\mathbb{C}$ 
is a representable morphism from $\mathcal{D}_{\infty,\mathbb{C}}^{l}$ to $\mathcal{X}$. Two twisted arcs $\alpha, \alpha': \mathcal{D}_{\infty,\mathbb{C}}^{l} \rightarrow \mathcal{X}$
of order $l$ are equivalent if there is a $2$-morphism from $\alpha$ to $\alpha'$. The set of equivalence classes of twisted arcs over $\mathcal{X}$ is 
identified with  $|\mathcal{J}_{\infty}\mathcal{X}|$ (p16 \cite{YasMotivic}), which we will call the 
\emph{space of twisted arcs} of $\mathcal{X}$. 

For $0 \leq n \leq \infty$ and a positive integer $l$, the scheme $D_{nl, \mathbb{C}}$  has a unique closed point.
A (not necessarily representable)  morphism 
$\gamma:  \mathcal{D}_{n,\mathbb{C}}^{l} \rightarrow \mathcal{X}$ induces an object  $\bar{\gamma}$
of $\mathcal{X}$ over $\mathbb{C}$,
\begin{equation*}
\bar{\gamma}: \Spec \mathbb{C} \rightarrow D_{nl, \mathbb{C}} \rightarrow \mathcal{D}_{n,\mathbb{C}}^{l} \rightarrow \mathcal{X}.
\end{equation*}
The automorphism group of the object in   $\mathcal{D}_{n,\mathbb{C}}^{l}$
corresponding to the morphism $\Spec \mathbb{C} \rightarrow  \mathcal{D}_{n,\mathbb{C}}^{l}$ is $\mu_{l}$. Hence we get a homomorphism $\phi: \mu_{l} \rightarrow 
\Aut(\bar{\gamma})$.
Since every automorphism in $\mathcal{D}_{n,\mathbb{C}}^{l}$ is locally induced by $\theta$, $\phi$ is injective if and only if $\Aut(\chi) \rightarrow \Aut(\gamma(\chi))$ is injective for all objects 
$\chi$ in $\mathcal{D}_{n,\mathbb{C}}^{l}$. This holds if and only if $\gamma$ is representable \cite{LMBChamps}. We conclude that $\gamma$ is representable if and only if $\phi$ is injective.

By considering coarse moduli spaces we have a commutative diagram
\begin{equation*}
\xymatrix{ \mathcal{D}_{n,\mathbb{C}}^{l}  \ar[d] \ar[r]^\gamma & \mathcal{X} \ar[d] \\
D_{n,\mathbb{C}} \ar[r]^{\gamma'} & X .}
\end{equation*}
We will consider the $n$th \emph{jet scheme} $J_{n}(X) = 
\Hom(D_{n,\mathbb{C}}, X)$ of $X$
and identify jet schemes with their $\mathbb{C}$-valued points. When $n = \infty$, $J_{\infty}(X)$ is called the \emph{arc space} of $X$. We have a map
\begin{equation}\label{smitten}
\tilde{\pi}_{n}: |\mathcal{J}_{n}\mathcal{X}| \rightarrow J_{n}(X)
\end{equation}
\begin{equation*}
\tilde{\pi}_{n}(\gamma) = \gamma'.
\end{equation*}

Fix a $d$-dimensional cone $\sigma$ of $\Sigma$ and consider the open substack $\mathcal{X}(\bsigma) = [\mathbb{A}^{d}/N(\sigma)]$ of $\mathcal{X}$. 
A twisted $n$-jet $\mathcal{D}_{n, S}^{l} \rightarrow \mathcal{X}(\bsigma)$
can be lifted to a morphism between atlases, $D_{nl,S} \rightarrow \mathbb{A}^{d}$. 
Yasuda used these lifts to describe the stack $\mathcal{J}_{n}^{l}\mathcal{X}$ (Proposition 2.8 \cite{YasMotivic}). We will present his result and a sketch of the proof in our situation.
We first fix some
notation.
The action of $\mu_{l}$ on
$D_{nl, \mathbb{C}}$ extends to an action on $J_{nl}(\mathbb{A}^{d})$. The action of $N(\sigma)$ on $\mathbb{A}^{d}$ also extends to an action on $J_{nl}(\mathbb{A}^{d})$. 
If $g$ is an element of $N(\sigma)$ of
order $l$, let $J_{nl}^{(g)}(\mathbb{A}^{d})$ be the closed subscheme of $J_{nl}(\mathbb{A}^{d})$ on which the actions of $\zeta_{l}$ in $\mu_{l}$ and $g$ in $N(\sigma)$ agree.

\begin{lemma}[\cite{YasMotivic}]\label{lemon}
For $0 \leq n \leq \infty$, we have a homeomorphism
\begin{equation*}
|\mathcal{J}_{n}\mathcal{X}(\bsigma)| \cong \coprod_{g \in N(\sigma)} J_{nl}^{(g)}(\mathbb{A}^{d})/N(\sigma).
\end{equation*}
\end{lemma}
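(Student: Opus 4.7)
The plan is to unpack the definition of a representable morphism $\gamma: \mathcal{D}_{n,\mathbb{C}}^l \to \mathcal{X}(\bsigma) = [\mathbb{A}^d/N(\sigma)]$ via the universal property of quotient stacks and then reduce it to standard data on $J_{nl}(\mathbb{A}^d)$. Such a $\gamma$ corresponds to an $N(\sigma)$-torsor $P \to \mathcal{D}_{n,\mathbb{C}}^l$ equipped with an $N(\sigma)$-equivariant morphism $P \to \mathbb{A}^d$. Pulling back along the atlas $\pi: D_{nl,\mathbb{C}} \to \mathcal{D}_{n,\mathbb{C}}^l$ yields an $N(\sigma)$-torsor $P' \to D_{nl,\mathbb{C}}$ carrying $\mu_l$-descent data together with an $N(\sigma)$-equivariant map $P' \to \mathbb{A}^d$.

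First I would trivialize $P'$. Since $D_{nl,\mathbb{C}}$ is local henselian with algebraically closed residue field (whether $n$ is finite or $\infty$) and $N(\sigma)$ is finite, every \'etale $N(\sigma)$-torsor on $D_{nl,\mathbb{C}}$ is trivial. After choosing an isomorphism $P' \cong D_{nl,\mathbb{C}} \times N(\sigma)$, the $\mu_l$-descent datum becomes a homomorphism $\phi: \mu_l \to N(\sigma)$, and the equivariant map $P' \to \mathbb{A}^d$ collapses to a morphism $\tilde\gamma: D_{nl,\mathbb{C}} \to \mathbb{A}^d$ that is equivariant for $\mu_l$ acting on $\mathbb{A}^d$ through $\phi$. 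Invoking the representability criterion already established in this section, $\gamma$ is representable if and only if $\phi$ is injective, which (as $|\mu_l| = l$) is equivalent to $g := \phi(\zeta_l) \in N(\sigma)$ having order exactly $l$. For such $g$, the $\mu_l$-equivariance of $\tilde\gamma$ is precisely the condition that $\zeta_l \in \mu_l$ and $g \in N(\sigma)$ induce the same automorphism of $\tilde\gamma \in J_{nl}(\mathbb{A}^d)$, i.e.\ that $\tilde\gamma$ is a $\mathbb{C}$-point of $J_{nl}^{(g)}(\mathbb{A}^d)$.

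Next I would identify the equivalence relation. Two trivializations of $P'$ differ by translation by some $h \in N(\sigma)$; since $N(\sigma)$ is abelian this leaves $\phi$ (hence $g$) unchanged and translates $\tilde\gamma$ by the induced $N(\sigma)$-action on $J_{nl}(\mathbb{A}^d)$, which preserves $J_{nl}^{(g)}(\mathbb{A}^d)$. A $2$-morphism between two representable morphisms $\gamma,\gamma': \mathcal{D}_{n,\mathbb{C}}^l \to \mathcal{X}(\bsigma)$ is an isomorphism of the associated torsor/equivariant-map pairs, which after fixing trivializations is recorded by the same $N(\sigma)$-action. Quotienting by this action and taking the disjoint union over $g \in N(\sigma)$ (with $l$ taken to be the order of $g$, which covers every $\mathcal{J}_n^l \mathcal{X}(\bsigma)$) gives the asserted bijection. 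Performing the construction with general test schemes $S$ in place of $\Spec \mathbb{C}$ then shows it is compatible with Zariski opens, yielding a homeomorphism rather than just a bijection.

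The main obstacle will be the careful bookkeeping of $2$-isomorphisms and descent data after trivialization: verifying that $2$-morphisms over the identity correspond bijectively to the elements of $N(\sigma)$ used to quotient on the right-hand side, and that the topology on $|\mathcal{J}_n\mathcal{X}(\bsigma)|$ matches the quotient topology on $\coprod_g J_{nl}^{(g)}(\mathbb{A}^d)/N(\sigma)$. The conceptual crux is the triviality of $N(\sigma)$-torsors on $D_{nl,\mathbb{C}}$; once that is available, all the twisted-jet data is recorded by an ordinary jet in $J_{nl}(\mathbb{A}^d)$ together with an explicit monodromy element $g$.
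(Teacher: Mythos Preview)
Your proposal is correct and arrives at exactly the same data as the paper: a pair $(g,\tilde\gamma)$ with $g\in N(\sigma)$ of order $l$ and $\tilde\gamma\in J_{nl}^{(g)}(\mathbb{A}^d)$, modulo the $N(\sigma)$-action. The packaging differs slightly. The paper exploits the structure of the \emph{source}: it uses the explicit presentation of $\mathcal{D}_{n,\mathbb{C}}^l$ set up earlier in the section (the universal object $\alpha$ over the atlas and the generating automorphism $\theta$), so that a representable morphism $\gamma$ is determined by the images $\gamma(\alpha)$ and $\gamma(\theta)$, which one then reads off as the diagram $(g,\nu)$. You instead exploit the structure of the \emph{target}: the universal property of $[\mathbb{A}^d/N(\sigma)]$ as a quotient stack, pulling back the resulting torsor along $\pi$ and unwinding the descent datum. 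These are dual bookkeeping schemes for the same trivialization; your invocation of abelianness of $N(\sigma)$ to see that $g$ is well-defined corresponds to the paper's direct check that the commutativity of the $\theta$-square forces $g=g'$. Both the paper and you only really establish the bijection and gesture at the homeomorphism, so neither argument is more complete on that point.
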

\begin{proof}
We will only show that there is a bijection between the two sets.
We have noted that a representable morphism $\gamma: \mathcal{D}_{n,\mathbb{C}}^{l} \rightarrow [\mathbb{A}^{d}/N(\sigma)]$ is determined by the images of $\alpha$ and $\theta$. These 
images have the form
\begin{equation*}
\xymatrix{  &  & \mathbb{A}^{d} \\
D_{nl,\mathbb{C}} \times N(\sigma) \ar[urr]^{(v,\lambda) \mapsto \nu(v) \cdot \lambda} \ar[d]^{pr_{1}} \ar[r]_{\zeta_{l} \times g^{-1}} & D_{nl,\mathbb{C}} \times N(\sigma) 
\ar[ur]_{(v,\lambda) \mapsto \nu(v) \cdot \lambda} \ar[d]^{pr_{1}} & \\
D_{nl,\mathbb{C}} \ar[r]^{\zeta_{l}} & D_{nl,\mathbb{C}}   & }
\end{equation*}
for some $g$ in $N(\sigma)$ of order $l$ and some $nl$-jet $\nu: D_{nl,\mathbb{C}} \rightarrow \mathbb{A}^{d}$. 
Conversely, given such a diagram, we can construct a representable morphism. The diagram is determined by any choice of $g$ and $\nu$ satisfying
\begin{equation*}
\xymatrix{ D_{nl,\mathbb{C}} \ar[d]^{\zeta_{l}} \ar[r]^{\nu} & \mathbb{A}^{d} \ar[d]^{g} \\
D_{nl,\mathbb{C}} \ar[r]^{\nu} & \mathbb{A}^{d}.
}  
\end{equation*}
That is, $\gamma$ is determined by the pair $(g, \nu)$,  where $g$ has order $l$ and $\nu$ lies in $J_{nl}^{(g)}(\mathbb{A}^{d})$. 
Suppose $\gamma'$ is a twisted $n$-jet of order $l$ determined by the pair $(g', \nu')$. A $2$-morphism from $\gamma$ to $\gamma'$ is determined by a morphism
$\beta: \gamma(\alpha) \rightarrow \gamma'(\alpha)$ in $[\mathbb{A}^{d}/N(\sigma)]$ over the identity morphism on $D_{nl,\mathbb{C}}$,  such that  the following diagram commutes
\[
\xymatrix{ \gamma(\alpha) \ar[d]^{\gamma(\theta)} \ar[r]^{\beta} & \gamma'(\alpha) \ar[d]^{\gamma'(\theta)} \\
\gamma(\alpha) \ar[r]^{\beta} & \gamma'(\alpha).
}  
\]
One verifies that the morphism $\beta$ is determined by an element $h$ in $N(\sigma)$ such that $\nu = \nu' \cdot h$ and that the diagram above is commutative if and only if $g = g'$. 
Hence the equivalence class of $\gamma$ in $|\mathcal{J}_{n}\mathcal{X}(\bsigma)|$ corresponds to a point in $J_{nl}^{(g)}(\mathbb{A}^{d})/N(\sigma)$. This gives our desired bijection. 

\end{proof}

\begin{rem}\label{doko}
Borisov, Chen and Smith gave a decomposition of $|\mathcal{I}\mathcal{X}|$ into connected components indexed by $\BOX(\bSigma)$ (Proposition 4.7 \cite{BCSOrbifold}). 
Given $v$ in $\BOX(\bSigma)$, let $\sigma(v)$ be the cone containing $v$ in its relative interior and 
let $\Sigma_{\sigma(v)}$ be the simplicial fan in $(N/N_{\sigma(v)})_{\mathbb{R}}$ with cones given by the projections 
of the cones in $\Sigma$ containing $\sigma(v)$. The associated toric variety $X(\Sigma_{\sigma(v)})$ is a $T$-invariant closed subvariety of $X$. 
The connected component 
of $|\mathcal{I}\mathcal{X}|$ corresponding
to $v$ is homeomorphic to the simplicial toric variety $X(\Sigma_{\sigma(v)})$. By taking $n = 0$ in Lemma \ref{lemon}, we recover this decomposition for 
$|\mathcal{I}\mathcal{X}(\bsigma)|$.
\end{rem}

Consider an element $g$ in $N(\sigma)$ of order $l$ 
corresponding to $v$ in $\BOX(\btau)$, for some cone $\tau$ contained in $\sigma$. We will give an explicit description of $J_{nl}^{(g)}(\mathbb{A}^{d})$.
If we write
$v = \sum_{i = 1}^{d} q_{i} b_{i}$,
for some $0 \leq q_{i} < 1$, then recall from (\ref{actions}) that the action of $g$ on $\mathbb{A}^{d}$ is given by 
\begin{equation*}
g \cdot (x_{1}, \ldots, x_{d}) = (\zeta_{l}^{\,lq_{1}}x_{1}, \ldots , \zeta_{l}^{\,lq_{d}}x_{d}). 
\end{equation*}
An element $\nu$ of $J_{nl}(\mathbb{A}^{d})$ can be written in the form
\begin{equation*}
\nu = ( \, \sum_{j= 0}^{nl} \alpha_{1,j}t^{j}\, , \ldots , \, \sum_{j= 0}^{nl} \alpha_{d,j}t^{j} \, ),
\end{equation*}
for some $\alpha_{i,j}$ in $\mathbb{C}$, $1 \leq i \leq d$, $0 \leq j \leq nl$. We have
\begin{equation*}
g \cdot \nu =  ( \, \zeta_{l}^{\,lq_{1}} \sum_{j= 0}^{nl} \alpha_{1,j}t^{j}\, , \ldots , \, \zeta_{l}^{\,lq_{d}} \sum_{j= 0}^{nl} \alpha_{d,j}t^{j} \, )
\end{equation*}
\begin{equation*}
\zeta_{l} \cdot \nu = ( \, \sum_{j= 0}^{nl} \alpha_{1,j}\zeta_{l}^{j}t^{j}\, , \ldots , \, \sum_{j= 0}^{nl}\alpha_{d,j} \zeta_{l}^{j}t^{j} \, ) .
\end{equation*}
Hence $v$ lies in $J_{nl}^{(g)}(\mathbb{A}^{d})$ if and only if $\alpha_{i,j} = 0$ whenever $j \not\equiv lq_{i} \, \left(\textrm{mod } l \right)$. We conclude that
\begin{equation}\label{weagles}
J_{nl}^{(g)}(\mathbb{A}^{d}) = \{     ( \, \sum_{j= 0}^{nl} \alpha_{1,j}t^{j}\, , \ldots , \, \sum_{j= 0}^{nl} \alpha_{d,j}t^{j} \, )      \mid
\alpha_{i,j} = 0 \textrm{ if } j \not\equiv lq_{i} \, \left(\textrm{mod } l \right) \}.
\end{equation}

\section{Twisted Arcs of Toric Stacks}\label{arc}

In this section we describe an action of $J_{\infty}(T)$ on an open, dense subset of $|\mathcal{J}_{\infty}\mathcal{X}|$ and compute the corresponding orbits and orbit closures.
 
We first recall Ishii's decomposition of the arc space of the toric variety $X$ \cite{IshArc}. Recall that $D_{1}, \ldots, D_{r}$ denote the $T$-invariant prime divisors of $X$ and
let $J_{\infty}(X)' = J_{\infty}(X) \smallsetminus \cup_{i} J_{\infty}(D_{i})$. The action of $T$ on $X$ extends to an action of $J_{\infty}(T)$ on $J_{\infty}(X)'$
(Proposition 2.6 \cite{IshArc}). Given an arc $\gamma$ in $J_{\infty}(X)'$, we can find a $d$-dimensional cone $\sigma$ such that
$\gamma: \Spec \mathbb{C}[[t]] \rightarrow U_{\sigma} \subseteq X$
corresponds to the ring homomorphism
$\gamma^{\#}: \mathbb{C}[\sigma^{\vee} \cap M] \rightarrow \mathbb{C}[[t]]$.
We have an induced semigroup morphism
$\sigma^{\vee} \cap M \rightarrow \mathbb{N}$,
$u \mapsto \ord_{t} \gamma^{\#}(\chi^{u})$,
which extends to  homomorphism from $M$ to $\mathbb{Z}$, necessarily of the form $\langle \: , v \rangle$, for some 
$v$ in $\sigma \cap N$. 
 Consider the arc
\begin{equation*}
\gamma_{v} : \Spec \mathbb{C}[[t]] \rightarrow U_{\sigma} \subseteq X
\end{equation*}
corresponding to the ring homomorphism
\begin{equation*}
\gamma_{v}^{\#}: \mathbb{C}[\sigma^{\vee} \cap M] \rightarrow \mathbb{C}[[t]]
\end{equation*}
\begin{equation*}
\chi^{u}  \mapsto t^{\langle u , v \rangle}.  
\end{equation*}
If $\phi$ denotes the arc in $J_{\infty}(T)$ corresponding to the ring homomorphism
$\phi^{\#}: \mathbb{C}[\sigma^{\vee} \cap M] \rightarrow \mathbb{C}[[t]]$,
$\chi^{u}  \mapsto \gamma^{\#}(\chi^{u})/t^{\langle u , v \rangle}$,
then $\gamma = \gamma_{v} \cdot \phi$ and both $v$ in $|\Sigma| \cap N$ and $\phi$ in $J_{\infty}(T)$ are uniquely determined. 
We have shown the first part of the following theorem.
\begin{theorem}[\cite{IshArc}]\label{Ishii}
We have a decomposition of $J_{\infty}(X)'$ into $J_{\infty}(T)$-orbits 
\begin{equation*}
J_{\infty}(X)' = \coprod_{v \in |\Sigma| \cap N} \gamma_{v} \cdot J_{\infty}(T).
\end{equation*}
Moreover, $\gamma_{w} \cdot J_{\infty}(T) \subseteq \overline{\gamma_{v} \cdot J_{\infty}(T)}$ if and only if $w - v$ lies in 
some cone $\sigma$ containing $v$ and $w$.
\end{theorem}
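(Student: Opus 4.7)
The first part of the theorem is immediate from the discussion preceding the statement: each arc $\gamma \in J_\infty(X)'$ factors uniquely as $\gamma = \gamma_v \cdot \phi$, where $v \in |\Sigma| \cap N$ is determined by $u \mapsto \ord_t \gamma^\#(\chi^u)$ and $\phi \in J_\infty(T)$ is obtained by division. Since the $J_\infty(T)$-action is continuous, $\overline{\gamma_v \cdot J_\infty(T)}$ is $J_\infty(T)$-invariant, so the orbit closure statement reduces to testing whether $\gamma_w$ itself lies in $\overline{\gamma_v \cdot J_\infty(T)}$.

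For the $(\Leftarrow)$ direction, assume $v, w$ both lie in some cone $\sigma$ of $\Sigma$ with $w - v \in \sigma$. I would exhibit an explicit degeneration. For $s \in \mathbb{C}^*$, define an arc $\phi_s \in J_\infty(T)$ by the ring homomorphism $\phi_s^\#(\chi^u) = (s + t)^{\langle u, w - v \rangle}$ on $\mathbb{C}[M]$; this is multiplicative in $u$, and $s + t$ is a unit in $\mathbb{C}[[t]]$ for $s \neq 0$, so $\phi_s$ is a legitimate arc of $T$. Then
\[
(\gamma_v \cdot \phi_s)^\#(\chi^u) = t^{\langle u, v \rangle}(s + t)^{\langle u, w - v \rangle},
\]
which is a polynomial in $s$ and $t$ for $u \in \sigma^\vee \cap M$, since $\langle u, w - v \rangle \geq 0$. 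Truncating at each jet level $n$, the map $s \mapsto \pi_n(\gamma_v \cdot \phi_s)$ is a morphism $\mathbb{A}^1 \to J_n(X)$ that lands in the orbit for $s \neq 0$ and sends $s = 0$ to $\pi_n(\gamma_w)$, so $\gamma_w \in \overline{\gamma_v \cdot J_\infty(T)}$.

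For the $(\Rightarrow)$ direction, assume $\gamma_w \in \overline{\gamma_v \cdot J_\infty(T)}$. Projecting to centers via $\pi_0$, the point $\bar\gamma_w$ lies in the closure of the $T$-orbit of $\bar\gamma_v$, namely $\overline{O_{\sigma(v)}} = \bigsqcup_{\tau \succeq \sigma(v)} O_\tau$, where $\sigma(v)$ denotes the cone of $\Sigma$ containing $v$ in its relative interior. Since $\bar\gamma_w \in O_{\sigma(w)}$, this forces $\sigma(v) \preceq \sigma(w)$, so $\sigma := \sigma(w)$ is a cone of $\Sigma$ containing both $v$ and $w$. Working in $U_\sigma = \Spec \mathbb{C}[\sigma^\vee \cap M]$, for each $u \in \sigma^\vee \cap M$ the locus $\{\gamma : \ord_t \gamma^\#(\chi^u) \geq \langle u, v \rangle\}$ is closed in every $J_n(U_\sigma)$, contains $\gamma_v \cdot J_\infty(T)$, hence contains its closure, hence contains $\gamma_w$. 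This yields $\langle u, w \rangle \geq \langle u, v \rangle$ for every $u \in \sigma^\vee \cap M$, which by $\sigma^{\vee\vee} = \sigma$ is precisely $w - v \in \sigma$.

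The main obstacle is, in the $(\Rightarrow)$ direction, verifying that the Zariski closure of the orbit inside $J_\infty(X)$ really is cut out by these valuative inequalities. Once a finite set of semigroup generators of $\sigma^\vee \cap M$ is fixed, each inequality becomes the vanishing of finitely many coordinate functions on $J_n(U_\sigma)$, and the orbit is an open-dense subset of the resulting closed subscheme; passing to the inverse limit over $n$ then transfers the statement back to $J_\infty(X)$. When $U_\sigma$ is singular, the book-keeping around the semigroup relations between the generators takes some care, but nothing in the argument is genuinely harder than the smooth case. The $(\Leftarrow)$ direction, by contrast, is entirely constructive.
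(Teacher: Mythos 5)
Your argument for the decomposition matches the paper's verbatim: the paper derives the factorisation $\gamma = \gamma_v \cdot \phi$ exactly as you describe and then remarks ``We have shown the first part of the following theorem.'' The orbit-closure relations, however, are not proved in the paper at all --- the theorem header carries the citation to \cite{IshArc} and the text offers no further argument --- so your proof of that part is a supplement rather than a parallel. It is the standard toric argument, and is essentially Ishii's: the explicit degeneration $\phi_s^\#(\chi^u) = (s+t)^{\langle u, w-v\rangle}$ handles $(\Leftarrow)$, and for $(\Rightarrow)$ one first uses $\pi_0$ and the toric orbit-closure stratification to locate a cone $\sigma = \sigma(w)$ containing both $v$ and $w$, and then reads off $w - v \in \sigma^{\vee\vee} = \sigma$ from the lower-semicontinuity of the order functions $\ord_t\gamma^\#(\chi^u)$ on $J_\infty(U_\sigma)$. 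Two small points deserve a sentence in a careful write-up. In $(\Rightarrow)$, the closed condition at level $n$ is really $\ord_t\gamma^\#(\chi^u) \geq \min(\langle u, v\rangle, n+1)$, and the inequality $\langle u, w\rangle \geq \langle u, v\rangle$ is obtained by letting $n$ grow. And in both directions, passing between ``$\pi_n(\gamma_w) \in \overline{\pi_n(\text{orbit})}$ for all $n$'' and ``$\gamma_w$ is in the closure in $J_\infty$'' uses that $\overline{S} = \bigcap_n \pi_n^{-1}(\overline{\pi_n(S)})$ in the inverse-limit topology on $J_\infty(X)$; this is true, and worth stating explicitly.
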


We will give a similar decomposition of the space $|\mathcal{J}_{\infty}\mathcal{X}|$ of twisted arcs of $\mathcal{X}$. For $i = 1, \ldots, r$, we first describe a closed substack 
$\mathcal{D}_{i}$ of $\mathcal{X}$ with coarse moduli space $D_{i}$ \cite{BCSOrbifold}. Fix a maximal cone $\sigma$. 
If $\rho_{i}$ is not in $\sigma$ then 
$\mathcal{D}_{i}$ is disjoint from $\mathcal{X}(\bsigma)$. Suppose $\rho_{i}$ is a ray of $\sigma$ and 
consider the projection $p_{i}: N \rightarrow N(\rho_{i}) = N / N_{\rho_{i}}$,
where $N_{\rho_{i}}$ is the sublattice of $N$ generated by $b_{i}$. 
If $\sigma_{i}$ denotes the cone $p_{i}(\sigma)$ in $N(\rho_{i})$,
then $(N(\rho_{i}), \sigma_{i}, \{ p_{i}(b_{j}) \}_{\rho_{j} \subseteq \sigma} )$
is the stacky fan corresponding to 
$\mathcal{D}_{i} \cap \mathcal{X}(\bsigma)$\footnote{Note that 
$N(\rho_{i})$ may have torsion and so $\sigma_{i}$ is a cone in the image of $N(\rho_{i})$ in $N(\rho_{i})_{\mathbb{Q}}$. There are no difficulties in generalising to this situation.
See Section \ref{remedy} for a discussion of this issue. This is the level of generality used in \cite{BCSOrbifold}.}. 
Note that $p_{i}$ induces an isomorphism between $N(\sigma) = N/N_{\sigma}$ and 
$N(\rho_{i})(\sigma_{i}) =  N(\rho_{i}) / N(\rho_{i})_{\sigma_{i}}$. We have an inclusion of $\mathbb{A}^{d - 1}$ into $\mathbb{A}^{d}$
by setting the coordinate corresponding to 
$\rho_{i}$ to be zero.
We conclude that  
$\mathcal{D}_{i} \cap \mathcal{X}(\bsigma) \cong [\mathbb{A}^{d - 1}/ N(\sigma)]$ and the inclusion of $\mathbb{A}^{d - 1}$ into $\mathbb{A}^{d}$ 
induces the inclusion of  $\mathcal{D}_{i} \cap \mathcal{X}(\bsigma)$ into $\mathcal{X}(\bsigma)$. Moreover, by Lemma \ref{lemon}, 
if $g$ is an element of $N(\sigma)$, then
we have an induced inclusion
\begin{equation}\label{very}
J_{\infty}^{(g)}(\mathbb{A}^{d - 1})/N(\sigma) \hookrightarrow J_{\infty}^{(g)}(\mathbb{A}^{d})/N(\sigma),
\end{equation}
corresponding to the closed inclusion $|\mathcal{J}_{\infty}(\mathcal{D}_{i} \cap \mathcal{X}(\bsigma))| \hookrightarrow 
|\mathcal{J}_{\infty}\mathcal{X}(\bsigma)|$. We define $|\mathcal{J}_{\infty}\mathcal{X}|'$ to be the open subset
\begin{equation*}
|\mathcal{J}_{\infty}\mathcal{X}|' = |\mathcal{J}_{\infty}\mathcal{X}| \smallsetminus \cup_{i = 1}^{r} |\mathcal{J}_{\infty}\mathcal{D}_{i}|.
\end{equation*}
Similarly, we consider $|\mathcal{J}_{\infty}\mathcal{X}(\bsigma)|'$ and let $J_{\infty}(\mathbb{A}^{d})'$ be the open locus of arcs of  $\mathbb{A}^{d}$ that are not contained in a coordinate hyperplane. Setting $J_{\infty}^{(g)}(\mathbb{A}^{d})' = J_{\infty}^{(g)}(\mathbb{A}^{d}) \cap J_{\infty}(\mathbb{A}^{d})'$,
it follows from Lemma \ref{lemon} and
 (\ref{very}) that
\begin{equation}\label{dash}
|\mathcal{J}_{\infty}\mathcal{X}(\bsigma)|' \cong \coprod_{g \in N(\sigma)} J_{\infty}^{(g)}(\mathbb{A}^{d})'/N(\sigma).
\end{equation}
We will often make this identification.

For a fixed positive integer $l$, 
the open embedding of $T$ in $\mathcal{X}(\bsigma)$ extends to an open embedding of $J_{\infty}(T) \cong \mathcal{J}_{\infty}^{l}T$ in 
$\mathcal{J}_{\infty}^{l}\mathcal{X}(\bsigma)$. We obtain an action of $J_{\infty}(T)$ on $|\mathcal{J}_{\infty}^{l}\mathcal{X}(\bsigma)|$, which 
restricts to an action on $|\mathcal{J}_{\infty}^{l}\mathcal{X}(\bsigma)|'$. More specifically, we identify
$J_{\infty}(T) \cong J_{\infty}^{(l)}(T)/N(\sigma)$,
where $J_{\infty}^{(l)}(T)$ is the closed subscheme of $J_{\infty}(T)$ fixed by the action of $\mu_{l}$. In coordinates, 
\begin{equation*}
J_{\infty}^{(l)}(T) =  ( \, \sum_{j= 0}^{\infty} \beta_{1,j}t^{lj}\, , \ldots , \, \sum_{j= 0}^{\infty} \beta_{d,j}t^{lj} \, )      \mid \beta_{i,0} \neq 0 \textrm{ for }
i = 1, \ldots, d \}.
\end{equation*}
Fix an element $g$ in $N(\sigma)$ of order $l$ and recall from (\ref{weagles}) that 
\begin{equation*} 
J_{\infty}^{(g)}(\mathbb{A}^{d}) = \{     ( \, \sum_{j= 0}^{\infty} \alpha_{1,j}t^{j}\, , \ldots , \, \sum_{j= 0}^{\infty} \alpha_{d,j}t^{j} \, )      \mid
\alpha_{i,j} = 0 \textrm{ if } j \not\equiv lq_{i} \, \left( \textrm{mod } l \right) \}.
\end{equation*}
The elements in $J_{\infty}^{(g)}(\mathbb{A}^{d})'$ also satisfy the property that for each $i = 1, \ldots ,  d$, there exists a non-negative integer $j$ such that
 $\alpha_{i,j} \neq 0$.  
Componentwise multiplication of power series gives an
action, 
\begin{equation*}
J_{\infty}^{(l)}(T)/N(\sigma) \times J_{\infty}^{(g)}(\mathbb{A}^{d})'/N(\sigma) \rightarrow J_{\infty}^{(g)}(\mathbb{A}^{d})'/N(\sigma).
\end{equation*}
By (\ref{dash}), this induces an action of $J_{\infty}(T)$ on $|\mathcal{J}_{\infty}^{l}\mathcal{X}(\bsigma)|'$. By varying $l$, we obtain an 
action of 
$J_{\infty}(T)$ on $|\mathcal{J}_{\infty}\mathcal{X}|'$. We would like to describe the $J_{\infty}(T)$-orbits. 

Let $w$ be an element of $\sigma \cap N$. There is a unique decomposition 
$w = v + \sum_{i = 1}^{d} \lambda_{i}b_{i}$,
where $v$ lies in $\BOX(\btau)$, for some $\tau \subseteq \sigma$, and the $\lambda_{i}$ are non-negative integers. We will use the notation $\{ w \} = v$.
Suppose $v$ corresponds to an element $g$ in $N(\sigma)$ of order $l$. 
If we write
$v = \sum_{i = 1}^{d} q_{i} b_{i}$, for some $0 \leq q_{i} < 1$, then
$w = \sum_{i = 1}^{d} w_{i} b_{i} = \sum_{i = 1}^{d} (\lambda_{i} + q_{i}) b_{i}$, 
where $w_{i} = \lambda_{i} + q_{i}$. 
We define
$\tilde{\gamma}_{w} \in |\mathcal{J}_{\infty}^{l}\mathcal{X}(\bsigma)|' \subseteq |\mathcal{J}_{\infty}\mathcal{X}|'$
to be the equivalence class of twisted arcs corresponding to the element 
\begin{equation}\label{MickyO}
( t^{ l(\lambda_{1} + q_{1}) }, \ldots , t^{ l(\lambda_{d} + q_{d}) } ) = ( t^{ lw_{1}}, \ldots ,  t^{ lw_{d}} ) 
\end{equation}
of $J_{\infty}^{(g)}(\mathbb{A}^{d})'$, under the isomorphism (\ref{dash}).

\begin{theorem}\label{decomp}
We have a decomposition of $|\mathcal{J}_{\infty}\mathcal{X}|'$ into $J_{\infty}(T)$-orbits 
\begin{equation*}
|\mathcal{J}_{\infty}\mathcal{X}|' = \coprod_{v \in |\Sigma| \cap N} \tilde{\gamma}_{v} \cdot J_{\infty}(T).
\end{equation*}
Moreover, $\tilde{\gamma}_{w} \cdot J_{\infty}(T) \subseteq \overline{\tilde{\gamma}_{v} \cdot J_{\infty}(T)}$ if and only if 
$w - v = \sum_{\rho_{i} \subseteq \sigma} \lambda_{i}b_{i}$ for some non-negative integers $\lambda_{i}$ and  
some cone $\sigma$ containing $v$ and $w$. With the notation of Theorem \ref{Ishii} and (\ref{smitten}),
\begin{equation*}
\tilde{\pi}_{\infty}: |\mathcal{J}_{\infty}\mathcal{X}|' \rightarrow J_{\infty}(X)'
\end{equation*}
is a $J_{\infty}(T)$-equivariant bijection satisfying
\begin{equation*}
\tilde{\pi}_{\infty}(\tilde{\gamma}_{v}) = \gamma_{v}.
\end{equation*}
\end{theorem}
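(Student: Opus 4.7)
The strategy is to work locally on a maximal cone $\sigma$ using Lemma \ref{lemon} and the identification (\ref{dash}), then glue. The key computation concerns $J_\infty^{(g)}(\mathbb{A}^d)'$ for $g \in N(\sigma)$ of order $l$ corresponding to $v = \sum q_i b_i \in \BOX(\btau)$ with $\tau \subseteq \sigma$. By (\ref{weagles}), an arc in this locus has $i$-th coordinate in $t^{lq_i}\mathbb{C}[[t^l]]$, and the condition of avoiding coordinate hyperplanes forces each coordinate to be nonzero. Thus each coordinate factors uniquely as $t^{lw_i} u_i(t^l)$, with $w_i = \lambda_i + q_i$ for some $\lambda_i \in \mathbb{Z}_{\geq 0}$ and $u_i \in \mathbb{C}[[t^l]]$ a unit. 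Setting $\phi = (u_1(t^l), \ldots, u_d(t^l)) \in J_\infty^{(l)}(T)$ and $w = \sum w_i b_i \in \sigma \cap N$, the arc equals $\tilde{\gamma}_w \cdot \phi$ modulo $N(\sigma)$. Since $N(\sigma)$ scales each coordinate by a root of unity and hence preserves the orders of vanishing $lw_i$, the element $w$ is a well-defined invariant of each $N(\sigma)$-orbit, yielding the local decomposition $|\mathcal{J}_\infty\mathcal{X}(\bsigma)|' = \coprod_{w \in \sigma \cap N} \tilde{\gamma}_w \cdot J_\infty(T)$. For global gluing, $\tilde{\gamma}_w$ involves only coordinates indexed by rays of the minimal cone $\sigma(w) \ni w$, so its equivalence class in $|\mathcal{J}_\infty\mathcal{X}|'$ is independent of the maximal $\sigma \supseteq \sigma(w)$ chosen.

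For the orbit-closure assertion, $J_\infty^{(g)}(\mathbb{A}^d) \subseteq J_\infty(\mathbb{A}^d)$ is closed and $J_\infty^{(l)}(T)$-invariant, so $\overline{\tilde{\gamma}_v \cdot J_\infty(T)}$ stays within the $g$-component. Hence any $\tilde{\gamma}_w$ in this closure must satisfy $\{w\} = v$, i.e., $w - v \in N_\sigma = \bigoplus_{\rho_i \subseteq \sigma} \mathbb{Z} b_i$. Combined with Theorem \ref{Ishii}, applied under $\tilde{\pi}_\infty$, this forces $w - v \in \sigma \cap N_\sigma = \sum_{\rho_i \subseteq \sigma} \mathbb{Z}_{\geq 0} b_i$. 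Conversely, given such a relation $w - v = \sum \lambda_i b_i$, one checks that the family $\phi^{(s)} = (s + t^{l\lambda_1}, \ldots, s + t^{l\lambda_d}) \in J_\infty^{(l)}(T)$ for $s \in \mathbb{C}^*$ satisfies $\tilde{\gamma}_v \cdot \phi^{(s)} \to \tilde{\gamma}_w$ as $s \to 0$, realizing $\tilde{\gamma}_w$ in the orbit closure.

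Finally, I verify $\tilde{\pi}_\infty(\tilde{\gamma}_w) = \gamma_w$ by tracing through the coarse moduli: the lift $(t^{lw_1}, \ldots, t^{lw_d}): D_{nl,\mathbb{C}} \to \mathbb{A}^d$ pulls back a character $\chi^u = \prod x_i^{\langle u, b_i \rangle}$ (for $u \in \sigma^\vee \cap M$) to $t^{l\langle u, w \rangle}$, which descends via the quotient $t \mapsto t^l$ to $t^{\langle u, w \rangle}$ on $D_{n,\mathbb{C}}$, matching $\gamma_w^{\#}(\chi^u)$. $J_\infty(T)$-equivariance is built into both constructions via the open embedding of $T$ into $\mathcal{X}$. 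Bijectivity then follows: both sides decompose into $J_\infty(T)$-orbits identically indexed by $|\Sigma| \cap N$, and on each orbit the action is free --- any stabilizer of $\tilde{\gamma}_w$ in $J_\infty^{(l)}(T)$ must lie in the kernel $N(\sigma)$ of $J_\infty^{(l)}(T) \to J_\infty(T)$, hence is trivial in $J_\infty(T)$. The most delicate point throughout is keeping the interaction between the $N(\sigma)$-quotient and the $t \mapsto t^l$ substitution consistent, especially in identifying the $J_\infty(T)$-action on $|\mathcal{J}_\infty \mathcal{X}|'$ with the classical one on $J_\infty(X)'$ under $\tilde{\pi}_\infty$.
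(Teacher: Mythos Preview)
Your argument is correct and follows essentially the same local-to-global strategy as the paper: decompose $J_\infty^{(g)}(\mathbb{A}^d)'$ via the explicit description (\ref{weagles}), quotient by $N(\sigma)$, and glue over maximal cones. Two minor points: (i) for the ``only if'' direction of the closure relation you invoke Ishii's theorem through $\tilde{\pi}_\infty$, which requires $\tilde{\pi}_\infty(\tilde{\gamma}_w)=\gamma_w$ --- a fact you only establish afterwards, so reorder; the paper instead reads the condition $w_i\ge v_i$ directly off the coordinate model, which is simpler and self-contained; (ii) your verification of $\tilde{\pi}_\infty(\tilde{\gamma}_w)=\gamma_w$ via $\chi^u=\prod x_i^{\langle u,b_i\rangle}$ is in fact cleaner than the paper's computation with the explicit generators $u_i/a_i|H|$.
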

\begin{proof}
Let $\sigma$ be a $d$-dimensional cone in $\Sigma$ and let $g$ be an element in $N(\sigma)$ of order $l$ 
corresponding to an element $v$ in $\BOX(\btau)$, for some $\tau \subseteq \sigma$. 
By (\ref{weagles}), and with the notation of the previous discussion, we have a decomposition
\begin{equation*}
J_{\infty}^{(g)}(\mathbb{A}^{d})' = \coprod_{  \substack{w \in \sigma \cap N \\ \{w\} = v}}  ( t^{ lw_{1}}, \ldots ,  t^{ lw_{d}} ) \cdot J_{\infty}^{(l)}(T),
\end{equation*}
and hence 
\begin{equation*}
J_{\infty}^{(g)}(\mathbb{A}^{d})'/N(\sigma) \cong \coprod_{  \substack{w \in \sigma \cap N \\ \{w\} = v}}  \tilde{\gamma}_{w} \cdot J_{\infty}(T).
\end{equation*}
Also, 
$\tilde{\gamma}_{w} \cdot J_{\infty}(T) \subseteq \overline{\tilde{\gamma}_{w'} \cdot J_{\infty}(T)}$ in 
$J_{\infty}^{(g)}(\mathbb{A}^{d})'/N(\sigma)$ if and only if $w_{i} \geq w_{i}'$ for $i = 1, \ldots, d$.
We conclude that 
\begin{equation*}
|\mathcal{J}_{\infty}\mathcal{X}(\bsigma)|' = \coprod_{w \in \sigma \cap N} \tilde{\gamma}_{w} \cdot J_{\infty}(T),
\end{equation*}
and $\tilde{\gamma}_{w} \cdot J_{\infty}(T) \subseteq \overline{\tilde{\gamma}_{w'} \cdot J_{\infty}(T)}$ in 
$|\mathcal{J}_{\infty}\mathcal{X}(\bsigma)|'$ if and only if $\{w\} = \{w'\}$ and $w_{i} \geq w_{i}'$ for $i = 1, \ldots, d$. This is equivalent to 
$w - w' = \sum_{i = 1}^{d} \lambda_{i}b_{i}$ for some non-negative integers $\lambda_{i}$. Since the open subsets $|\mathcal{J}_{\infty}\mathcal{X}(\bsigma)|'$ cover
$|\mathcal{J}_{\infty}\mathcal{X}|'$ as $\sigma$ varies over all maximal cones, we obtain the desired decomposition and closure relations.

Consider the sublattice $N_{\sigma} \subseteq N$ and recall that $\sigma'$ is the cone in $N_{\sigma}$ generated by $b_{1}, \ldots, b_{d}$.
If $H$ denotes the quotient of $N$ by the sublattice generated by $v_{1}, \ldots, v_{d}$, then
we have a pairing
$\langle \; , \, \rangle : N \times M_{\sigma} \rightarrow \mathbb{Q}$,
$\langle  v_{i}, u_{j} \rangle = \delta_{i,j}|H|$,
where $\delta_{i,j} = 1$ if $i = j$ and $0$ otherwise. 
If $u_{1}, \ldots, u_{d}$ are the primitive integer generators of
$\sigma^{\vee}$ in $M$, then $u_{1}/a_{1}|H|, \ldots, u_{d}/a_{d}|H|$ are the primitive integer generators of $(\sigma')^{\vee}$ in $M_{\sigma}$.
We have made an identification throughout that
$\Spec \mathbb{C}[(\sigma')^{\vee} \cap M_{\sigma}] \cong \mathbb{A}^{d}$.
Consider an element $\gamma$ in $J_{\infty}^{(g)}(\mathbb{A}^{d})'$, 
for some $g$ in $N(\sigma)$ of order $l$, corresponding to an element of
$|\mathcal{J}_{\infty}\mathcal{X}(\bsigma)|'$.
Applying $\tilde{\pi}_{\infty}$ 
gives an arc in $U_{\sigma}$, which we denote by $\tilde{\pi}_{\infty}(\gamma)$. We have a commutative diagram
\begin{equation*}
\xymatrix{ \mathbb{C}[(\sigma')^{\vee} \cap M_{\sigma}]  \ar[r]^{\: \; \: \gamma^{\#}} & \mathbb{C}[[t]]  \\
\mathbb{C}[\sigma^{\vee} \cap M] \ar[r]^{\: \: \: \: \tilde{\pi}_{\infty}(\gamma)^{\#}} \ar[u] & \mathbb{C}[[t]] \ar[u]_{t \mapsto t^{l}}.
}  
\end{equation*}
It follows that $\tilde{\pi}_{\infty}$ is $J_{\infty}(T)$-equivariant. Let $w$ be an element of $\sigma \cap N$ and consider
the notation of (\ref{MickyO}). With a slight abuse of notation,
$(\tilde{\gamma}_{w})^{\#}(\chi^{u_{i}/a_{i}|H|})  = t^{lw_{i}}$,
and we compute 
\begin{equation*}
\tilde{\pi}_{\infty}(\tilde{\gamma}_{w})^{\#}(\chi^{u_{i}}) = ((t^{1/l})^{lw_{i}})^{a_{i}|H|} = t^{a_{i}w_{i}|H|} = t^{\langle u_{i}, w \rangle}. 
\end{equation*}
It follows from the definition of $\gamma_{w}$ that $\tilde{\pi}_{\infty}(\tilde{\gamma}_{w}) = \gamma_{w}$. 
\end{proof}

\begin{rem}
The morphism from a Deligne-Mumford stack to its coarse moduli space is proper \cite{LMBChamps}. Hence the fact that the map
$\tilde{\pi}_{\infty}: |\mathcal{J}_{\infty}\mathcal{X}|' \rightarrow J_{\infty}(X)'$
is bijective follows from Proposition 3.37 of \cite{YasMotivic}. 
\end{rem}

\begin{rem}\label{ginvo}
We give a geometric description of a well-known involution $\iota$ on $|\Sigma| \cap N$ (see, for example, Section 2 \cite{YoWeightI}). 
If $\tau$ is a cone in $\Sigma$ and $v$ is a lattice point in $\BOX(\btau)$, then $v$ can be uniquely written in the form 
$v = \sum_{\rho_{i} \subseteq \tau} q_{i}b_{i}$, for some $0< q_{i} <1$. We define 
$\iota = \iota_{\bSigma} : \BOX(\btau) \rightarrow \BOX(\btau)$  by 
$\iota(v) = \sum_{\rho_{i} \subseteq \tau} (1 - q_{i})b_{i}$.
By Remark \ref{doko}, the connected components of $|\mathcal{I}\mathcal{X}|$ are indexed by $\BOX(\bSigma)$. Recall that the elements of 
$|\mathcal{I}\mathcal{X}|$ are equivalence classes of pairs $(x, \alpha)$, where $x$ is an object of $\mathcal{X}$ over $\mathbb{C}$ and $\alpha$ is an automorphism of $x$.
There is a natural involution on  $|\mathcal{I}\mathcal{X}|$, taking a pair $(x, \alpha)$ to $(x, \alpha^{-1})$.
Observe that $I$ induces the involution $\iota$ on the connected components of $|\mathcal{I}\mathcal{X}|$. 

Every lattice point $w$ in $|\Sigma| \cap N$ can be uniquely written in the form 
$w = \{ w \} + \tilde{w}$, where $\{ w \}$ lies in $\BOX(\btau)$ for some $\tau \subseteq \sigma(w)$ and $\tilde{w}$ lies in $N_{\sigma(w)}$. Here 
$\sigma(w)$ is the cone of $\Sigma$ containing $w$ in
its relative interior. The map
$\iota$ extends to an involution on $|\Sigma| \cap N$, taking $w$ to $\iota(\{ w \}) + \tilde{w}$.
Recall that we have a projection morphism
$\pi: \mathcal{J}_{\infty}\mathcal{X} \rightarrow  \mathcal{I}\mathcal{X} = \mathcal{J}_{0}\mathcal{X}$.
We see that $I$ extends to a $J_{\infty}(T)$-equivariant involution 
$I : |\mathcal{J}_{\infty}\mathcal{X}|' \rightarrow |\mathcal{J}_{\infty}\mathcal{X}|'$ satisfying
$I(\tilde{\gamma}_{v}) = \tilde{\gamma}_{\iota(v)}$.
\end{rem}

\begin{rem}
For any non-negative integer $n$, we can consider 
$|\mathcal{J}_{n}\mathcal{X}|' = |\mathcal{J}_{n}\mathcal{X}| \smallsetminus \cup_{i = 1}^{r} |\mathcal{J}_{n}\mathcal{D}_{i}|$
and an action of $J_{n}(T)$ on $|\mathcal{J}_{n}\mathcal{X}|'$. Recall that we have projection morphisms 
$\pi_{n}: \mathcal{J}_{\infty}\mathcal{X} \rightarrow  \mathcal{J}_{n}\mathcal{X}$.
For each non-zero cone $\tau$ of $\Sigma$, let
\begin{equation*}
\overline{\BOX}(\bntau) = \{ v \in N \mid  v = \sum_{\rho_{i} \subseteq \tau} q_{i}b_{i} \textrm{  for some  }
0 < q_{i} \leq n \}.
\end{equation*}
We set $\overline{\BOX}(\mbox{\boldmath$n\{ 0 \}$}) = \{0\}$ and 
$\overline{\BOX}(\bnSigma) = \cup_{\tau \in \Sigma} \overline{\BOX}(\bntau)$. It can be shown that there is a decomposition of $|\mathcal{J}_{n}\mathcal{X}|'$ into 
$J_{n}(T)$-orbits
\begin{equation*}
|\mathcal{J}_{n}\mathcal{X}|' = \coprod_{v \in \overline{\BOX}(\bnSigma)} \pi_{n}(\tilde{\gamma}_{v}) \cdot J_{n}(T).
\end{equation*}
\end{rem}

\section{Contact Order along a Divisor}\label{contact}

In this section, we compute the contact order of a twisted arc along a $T$-invariant divisor on $\mathcal{X}$. 
Recall that for each maximal cone $\sigma$ in $\Sigma$, we have morphisms
\begin{equation*}
\mathbb{A}^{d} \stackrel{p}{\rightarrow}   [\mathbb{A}^{d}/N(\sigma)] \cong   \mathcal{X}(\bsigma)  \stackrel{q}{\rightarrow} 
  \mathbb{A}^{d}/N(\sigma) \cong   U_{\sigma} ,
\end{equation*}
where $\mathbb{A}^{d}$ is the atlas of $\mathcal{X}(\bsigma)$ via $p$ and $U_{\sigma}$ is the coarse moduli space of $\mathcal{X}(\bsigma)$.
For every ray $\rho_{i}$ in $\Sigma$, there is a corresponding divisor $\mathcal{D}_{i}$ of $\mathcal{X}(\bSigma)$ (see Section \ref{arc}). If $\rho_{i} \nsubseteq \sigma$ then 
$\mathcal{D}_{i}$ does not intersect 
$\mathcal{X}(\bsigma)$. If $\rho_{i} \subseteq \sigma$, then $\mathcal{D}_{i}$ corresponds to the divisor
$\{ x_{i} = 0 \}$ on $\mathbb{A}^{d}$, with an appropriate choice of coordinates. 
We say that a $\mathbb{Q}$-divisor $\mathcal{E}$ on $\mathcal{X}$ is \emph{$T$-invariant}  if $\mathcal{E} = \sum_{i} \beta_{i} \mathcal{D}_{i}$, for some $\beta_{i} \in \mathbb{Q}$.
There is a natural isomorphism of Picard groups over $\mathbb{Q}$ (Example 6.7 \cite{VisIntersection})
\begin{equation*}
q^{*}: \Pic_{\mathbb{Q}} (X(\Sigma) ) \rightarrow     \Pic_{\mathbb{Q}} (\mathcal{X}(\bSigma) ) 
\end{equation*}
\begin{equation*}
 D_{i} \mapsto   q^{*}D_{i} = a_{i}\mathcal{D}_{i}  .
\end{equation*}
The inverse map is induced by the pushforward functor
$q_{*}: \Pic_{\mathbb{Q}} (\mathcal{X}(\bSigma) ) \rightarrow \Pic_{\mathbb{Q}} (X(\Sigma) )$. 
The canonical divisor on $\mathcal{X}$ is given by
$K_{\mathcal{X}} = - \sum_{i} \mathcal{D}_{i}$,
and so
$q_{*}K_{\mathcal{X}} = - \sum_{i} a_{i}^{-1}D_{i}$. 
Recall that a $T$-invariant $\mathbb{Q}$-divisor $E = \sum_{\rho_{i} \in \Sigma} \alpha_{i} D_{i}$ on $X$ corresponds to a
real-valued piecewise $\mathbb{Q}$-linear function 
$\psi_{E}: |\Sigma| \rightarrow \mathbb{R}$ satisfying $\psi_{E}(v_{i}) = -\alpha_{i}$ \cite{FulIntroduction}. 
Note that 
$\psi_{q_{*}K_{\mathcal{X}}}(b_{i})  = 1$  for $i = 1, \ldots,r$, 
and hence, with the notation of Section \ref{tstack}, $\psi = \psi_{q_{*}K_{\mathcal{X}}}$.
Let $\mathcal{E} = \sum_{i} \beta_{i} \mathcal{D}_{i}$ be a $T$-invariant $\mathbb{Q}$-divisor on $\mathcal{X}$. Following Yasuda (Definition 4.17 \cite{YasMotivic}), we say that the pair 
$(\mathcal{X}, \mathcal{E})$ is \emph{Kawamata log terminal} if $\beta_{i} < 1$ for $i = 1, \ldots, r$. Geometrically, this says that for each maximal cone $\sigma$, the representative of $\mathcal{E}$ in the atlas $\mathbb{A}^{d}$ of $\mathcal{X}(\bsigma) = [\mathbb{A}^{d}/N(\sigma)]$
is supported on the coordinate axes with all coefficients less than $1$. Equivalently, the condition says that
$\psi_{q_{*}\mathcal{E}}(b_{i}) > -1$ for $i = 1, \ldots,r$.

If $\mathcal{E} = \sum u_{j} \mathcal{E}_{j}$  is a $\mathbb{Q}$-divisor on $\mathcal{X}$, for some prime divisors $\mathcal{E}_{j}$ and rational numbers $u_{j}$, then 
Yasuda \cite{YasMotivic} defined a function
\begin{equation*}
\ord \mathcal{E}: |\mathcal{J}_{\infty} \mathcal{X}| \smallsetminus  \cup_{j} |\mathcal{J}_{\infty} \mathcal{E}_{j}| \rightarrow \mathbb{Q},
\end{equation*}
\begin{equation*}
\ord \mathcal{E} = \sum_{j} u_{j} \ord \mathcal{E}_{j}.
\end{equation*}
When $\mathcal{E}$ is a prime divisor, the function  $\ord \mathcal{E}$ is defined as follows:
if $\gamma: \mathcal{D}_{\infty,\mathbb{C}}^{l} \rightarrow \mathcal{X}$ is a representable morphism, 
then consider the composition of $\gamma$ with the atlas $\Spec \mathbb{C}[[t]] \rightarrow \mathcal{D}_{\infty,\mathbb{C}}^{l}$, 
and choose a lifting to an arc $\bar{\gamma}$ 
of an atlas $M$ of $\mathcal{X}$. If $m$ is the contact order of $\bar{\gamma}$ along the representative 
of $\mathcal{E}$ in $M$, then
$\ord \mathcal{E}(\gamma) = m/l$.

The following lemma describes the function $\ord \mathcal{E}$ in the toric case.
We will use the notation of Theorem \ref{decomp}.

\begin{lemma}\label{order}
If $\mathcal{E}$ is a $T$-invariant $\mathbb{Q}$-divisor on $\mathcal{X}$,
then
\begin{equation*}
\ord \mathcal{E}( \tilde{\gamma}_{w} \cdot J_{\infty}(T) ) = -\psi_{q_{*}\mathcal{E}}(w).
\end{equation*}
In particular, 
\begin{equation*}
\ord K_{\mathcal{X}}( \tilde{\gamma}_{w} \cdot J_{\infty}(T) ) = -\psi(w).
\end{equation*}
\end{lemma}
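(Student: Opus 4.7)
The plan is to reduce to the case of a prime $T$-invariant divisor and then compute the contact order directly in the local atlas using the explicit representative of $\tilde{\gamma}_w$ from Theorem~\ref{decomp}.

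First, by the linearity of $\ord\mathcal{E}$ in $\mathcal{E}$ and the linearity of $\psi_{q_*\mathcal{E}}$ in $\mathcal{E}$, it suffices to treat the case $\mathcal{E} = \mathcal{D}_i$. Since $q_*\mathcal{D}_i = a_i^{-1}D_i$ and $\psi_{E}(v_j) = -\alpha_j$ for $E = \sum_j \alpha_j D_j$, one checks that $\psi_{q_*\mathcal{D}_i}(b_j) = a_j \psi_{q_*\mathcal{D}_i}(v_j) = -\delta_{ij}$. Consequently, if $w$ lies in a cone $\sigma$ of $\Sigma$ and we write $w = \sum_{\rho_j \subseteq \sigma} w_j b_j$, then $-\psi_{q_*\mathcal{D}_i}(w) = w_i$ when $\rho_i \subseteq \sigma$ and equals $0$ otherwise. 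So the target identity collapses to showing that $\ord \mathcal{D}_i(\tilde{\gamma}_w \cdot J_\infty(T)) = w_i$ (with $w_i = 0$ by convention when $\rho_i \not\subseteq \sigma$).

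Next I would localise: pick a maximal cone $\sigma$ containing $w$ and work inside $|\mathcal{J}_\infty \mathcal{X}(\bsigma)|'$ via the description (\ref{dash}). If $\rho_i \not\subseteq \sigma$, then $\mathcal{D}_i$ does not meet $\mathcal{X}(\bsigma)$, so $\ord \mathcal{D}_i$ vanishes on $\tilde{\gamma}_w$, matching $w_i = 0$. Otherwise, $\mathcal{D}_i$ is represented on the atlas $\mathbb{A}^d$ by the coordinate hyperplane $\{x_i = 0\}$ with coefficient $1$. By the construction (\ref{MickyO}), the twisted arc $\tilde{\gamma}_w$ lifts through the atlas $\Spec \mathbb{C}[[t]] \to \mathcal{D}_{\infty,\mathbb{C}}^l$ to the arc $\bar{\gamma} = (t^{lw_1}, \ldots, t^{lw_d})$ of $\mathbb{A}^d$, whose contact order along $\{x_i = 0\}$ is $lw_i$. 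By the definition of $\ord \mathcal{D}_i$ recalled just before the lemma, $\ord \mathcal{D}_i(\tilde{\gamma}_w) = lw_i/l = w_i$.

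Finally, I would verify $J_\infty(T)$-invariance. Any element $\phi \in J_\infty(T)$ is represented in $J_\infty^{(l)}(T)/N(\sigma)$ by a tuple $(\beta_1(t^l),\ldots,\beta_d(t^l))$ with $\beta_j(0) \ne 0$; componentwise multiplication sends the lift $\bar{\gamma}$ to $(\beta_1(t^l)t^{lw_1}, \ldots, \beta_d(t^l)t^{lw_d})$, whose contact order along $\{x_i=0\}$ is still $lw_i$ because $\beta_i(0) \neq 0$. Combining this with the previous paragraph yields the desired formula. The specialisation $\ord K_\mathcal{X}(\tilde{\gamma}_w \cdot J_\infty(T)) = -\psi(w)$ then follows from $K_\mathcal{X} = -\sum_i \mathcal{D}_i$, $q_*K_\mathcal{X} = -\sum_i a_i^{-1}D_i$, and the identification $\psi = \psi_{q_*K_\mathcal{X}}$ already noted in the text. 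No step looks genuinely hard; the only thing to be careful about is the bookkeeping that converts $\psi_{q_*\mathcal{D}_i}$ evaluated on the stacky generators $b_j$ (rather than the primitive generators $v_j$) into the right Kronecker delta, which is where the factor of $a_i$ between $q_*$ and the pairing is absorbed.
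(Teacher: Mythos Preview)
Your proposal is correct and follows essentially the same route as the paper: reduce by linearity to $\mathcal{E}=\mathcal{D}_i$, localise to a maximal cone $\sigma$ containing $w$, lift $\tilde{\gamma}_w$ to the arc $(t^{lw_1},\ldots,t^{lw_d})$ in the atlas $\mathbb{A}^d$, and read off the contact order along $\{x_i=0\}$. The only differences are that you spell out the Kronecker-delta bookkeeping for $\psi_{q_*\mathcal{D}_i}(b_j)$ and explicitly verify the $J_\infty(T)$-invariance, both of which the paper leaves implicit.
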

\begin{proof}
It will be enough to prove the result for $\mathcal{E} = \mathcal{D}_{i}$ and twisted arcs of the form $\tilde{\gamma}_{w}$. 
Consider the representable morphism
$\tilde{\gamma}_{w}: \mathcal{D}_{\infty,\mathbb{C}}^{l} \rightarrow \mathcal{X}(\bsigma) \subseteq \mathcal{X}$,
for some lattice point $w$ in a maximal cone $\sigma$. 
With the notation of (\ref{MickyO}), the composition of $\tilde{\gamma}_{w}$ with $\Spec \mathbb{C}[[t]] \rightarrow \mathcal{D}_{\infty,\mathbb{C}}^{l}$ lifts to an arc
$( t^{ lw_{1}}, \ldots ,  t^{ lw_{d}} )$ 
in $J_{\infty}(\mathbb{A}^{d})$. If $\rho_{i} \nsubseteq \sigma$ then $\ord \mathcal{D}_{i} (\tilde{\gamma}_{w}) = 0 = -\psi_{a_{i}^{-1}D_{i}}(w)$. If $\rho_{i} \subseteq \sigma$
then the divisor $\mathcal{D}_{i}$ is represented by the divisor $\{ x_{i} = 0 \}$ in the atlas $\mathbb{A}^{d}$ and we conclude that
$\ord \mathcal{D}_{i}( \tilde{\gamma}_{w}) = w_{i} = -\psi_{a_{i}^{-1}D_{i}}(w)$.
The second statement follows since $\psi = \psi_{q_{*}K_{\mathcal{X}}}$.
\end{proof}

\section{Motivic Integration on Toric Stacks}\label{motivic}

We consider motivic integration on a Deligne-Mumford stack as developed by Yasuda in \cite{YasMotivic}. We will compute motivic integrals associated to $T$-invariant divisors on $\mathcal{X}$ and show 
that they correspond to weighted $\delta$-vectors of an associated polyhedral complex. 

Recall that to any complex algebraic variety 
$X$ of dimension $r$, we can associate its \emph{Hodge polynomial} (see, for example, \cite{SriHodge})
\begin{equation*}
E_{X}(u,v) = \sum_{i,j = 0}^{r} (-1)^{i + j}h_{i,j}u^{i}v^{j} \in \mathbb{Z}[u,v].
\end{equation*}
The Hodge polynomial is determined by the properties
\begin{enumerate}
\item $h_{i,j} = \dim H^{j}(X, \Omega_{X}^{i})$ if $X$ is smooth and projective,
\item $E_{X}(u,v) = E_{U}(u,v) + E_{X \smallsetminus U}(u,v)$ if $U$ is an open subvariety of $X$,
\item $E_{X \times Y}(u,v) = E_{X}(u,v)E_{Y}(u,v)$.
\end{enumerate}
The second property means we can consider the Hodge polynomial of a constructible subset of a complex variety. 
For example, $E_{\mathbb{A}^{1}}(u,v) = E_{\mathbb{P}^{1}}(u,v) - E_{\{ \textrm{pt} \} }(u,v) = uv$ and hence 
$E_{\mathbb{A}^{n}}(u,v) = (uv)^{n}$. Similarly,  $E_{(\mathbb{C}^{*})^{n}}(u,v) = (uv - 1)^{n}$. More generally, we can compute the Hodge polynomial of 
any toric variety. Recall that if $\triangle$ is a fan in a lattice $N'_{\mathbb{R}}$, for some lattice $N'$, then its associated $h$-vector $h_{\triangle}(t)$ is defined by
\[
h_{\triangle}(t) = \sum_{\tau \in \triangle} t^{\dim \tau} (1 - t)^{\codim \tau}. 
\]

\begin{lemma}\label{kingston}
If $X = X(\triangle)$ is an $r$-dimensional toric variety associated to a fan $\triangle$, 
then 
\[
E_{X}(u , v) = (uv)^{r}h_{\triangle}((uv)^{-1}). 
\]
\end{lemma}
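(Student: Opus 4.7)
The plan is to exploit the torus-orbit decomposition of $X(\triangle)$ together with the additive/multiplicative properties of the Hodge polynomial. Recall that every toric variety decomposes set-theoretically as $X(\triangle) = \coprod_{\tau \in \triangle} O_{\tau}$, where $O_{\tau}$ denotes the $T'$-orbit corresponding to $\tau$ and is isomorphic to $(\mathbb{C}^{*})^{\codim \tau}$ (here $T' = \Spec \mathbb{C}[M']$ is the torus of $X$, so in particular $r = \dim X$ equals the rank of $N'$).

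Applying properties (2) and (3) of the Hodge polynomial, and the computation $E_{(\mathbb{C}^{*})^{n}}(u,v) = (uv-1)^{n}$ noted in the paragraph preceding the lemma, I would obtain
\begin{equation*}
E_{X}(u,v) = \sum_{\tau \in \triangle} E_{O_{\tau}}(u,v) = \sum_{\tau \in \triangle} (uv - 1)^{\codim \tau}.
\end{equation*}

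It then remains to verify that this sum equals $(uv)^{r} h_{\triangle}((uv)^{-1})$. Substituting $t = (uv)^{-1}$ into the definition
$h_{\triangle}(t) = \sum_{\tau \in \triangle} t^{\dim \tau}(1 - t)^{\codim \tau}$
and multiplying through by $(uv)^{r}$, the factor $(uv)^{r}$ cancels with $(uv)^{-\dim \tau}(uv)^{-\codim \tau} = (uv)^{-r}$ from the two powers of $t$, leaving $(uv - 1)^{\codim \tau}$ in each term. This matches the orbit sum above term-by-term.

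There is no real obstacle here: the argument is essentially a bookkeeping exercise once one has the orbit decomposition and the Hodge polynomial of a torus. The only minor point to be careful about is the convention that $r = \dim X = \textrm{rk } N'$, which is the natural reading of ``$r$-dimensional toric variety.''
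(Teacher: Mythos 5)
Your argument is correct and is essentially identical to the paper's: both use the torus-orbit decomposition of $X(\triangle)$ indexed by cones, the Hodge polynomial of $(\mathbb{C}^{*})^{\codim\tau}$, and the additivity of $E_{X}$ over the orbit stratification, followed by the same algebraic substitution $t=(uv)^{-1}$ into the definition of $h_{\triangle}$. Nothing to add.
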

\begin{proof}
We can write $X$ as a disjoint union of torus orbits indexed by the cones of $\triangle$ (see, for example, \cite{FulIntroduction}). The orbit corresponding to $\tau$ is isomorphic to 
$(\mathbb{C}^{*})^{\codim \tau}$. We compute, using the example above,
\[
E_{X}(u,v) = \sum_{\tau \in \triangle} E_{(\mathbb{C}^{*})^{\codim \tau}}(u,v) = \sum_{\tau \in \triangle} (uv - 1)^{\codim \tau} =  (uv)^{r}h_{\triangle}((uv)^{-1}). 
\]
\end{proof}

\begin{rem}
If 
$\triangle$ is an $r$-dimensional, simplicial fan with convex support, then $X = X(\triangle)$ has no odd cohomology and the coefficient of $(uv)^{i}$ in 
$E_{X}(u,v)$ is equal to the dimension of the $2i^{\textrm{th}}$ cohomology group of $X$ with compact support and rational coefficients. 
This follows from the above lemma, Lemma 4.1 of \cite{YoWeightI} and
Poincar\'e duality.  
\end{rem}

Recall that we have projection morphisms
$\pi_{n}: |\mathcal{J}_{\infty}\mathcal{X}| \rightarrow |\mathcal{J}_{n}\mathcal{X}|$, 
for each non-negative integer $n$. A subset $A \subseteq |\mathcal{J}_{\infty}\mathcal{X}|$ is a \emph{cylinder} if 
$A = \pi_{n}^{-1}\pi_{n}(A)$ and $\pi_{n}(A)$ is a constructible subset, for some non-negative integer $n$. In this case, the measure
$\mu_{\mathcal{X}}(A)$ of $A$ is defined to be
\begin{equation*}
\mu_{\mathcal{X}}(A) = E_{\pi_{n}(A)}(u,v)(uv)^{-nd} \in \mathbb{Z}[u,u^{-1},v,v^{-1}].
\end{equation*}
By Lemma 3.18 of \cite{YasMotivic}, the right hand side is independent of the choice of $n$. 
The collection of cylinders is closed under taking finite unions and finite intersections and 
$\mu_{\mathcal{X}}$ defines a finite measure on  $|\mathcal{J}_{\infty}\mathcal{X}|$.

We will compute the measure of certain cylinders. Recall the decomposition of $|\mathcal{J}_{\infty}\mathcal{X}|'$ in Theorem \ref{decomp}.  
Every $w$ in $|\Sigma| \cap N$ can be uniquely written in the form 
$w = \{ w \} + \tilde{w}$, where $\{ w \}$ lies in $\BOX(\btau)$ for some $\tau \subseteq \sigma(w)$ and $\tilde{w}$ lies in $N_{\sigma(w)}$. Recall that 
$\sigma(w)$ is the cone of $\Sigma$ containing $w$ in
its relative interior and that $\psi: |\Sigma| \rightarrow \mathbb{R}$ is the piecewise $\mathbb{Q}$-linear function satisfying $\psi(b_{i}) = 1$
for $i = 1, \ldots, r$. 

\begin{lemma}\label{orbitz}
For any $w$ in $|\Sigma| \cap N$, the orbit $\tilde{\gamma}_{w} \cdot J_{\infty}(T)$ is a cylinder in $|\mathcal{J}_{\infty}\mathcal{X}|$ with measure
\begin{equation*}
\mu_{\mathcal{X}}(\tilde{\gamma}_{w} \cdot J_{\infty}(T)) = (uv - 1)^{d} (uv)^{-\psi(w) + \psi(\{w\}) - \dim \sigma(\{w\})    }.
\end{equation*}
\end{lemma}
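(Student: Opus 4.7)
The plan is to reduce the measure computation to a single local chart. First pick a maximal cone $\sigma$ of $\Sigma$ containing $\sigma(\{w\})$, and let $g \in N(\sigma)$ be the element of order $l$ corresponding to $v := \{w\}$. Write $v = \sum_{i=1}^d q_i b_i$ with $0 \le q_i < 1$, so that $w = \sum_{i=1}^d w_i b_i$ with $w_i = \lambda_i + q_i$. Since the $J_\infty(T)$-action preserves the $T$-invariant open subset $|\mathcal{J}_\infty\mathcal{X}(\bsigma)|'$, the whole orbit lies there, and under (\ref{dash}) the twisted arc $\tilde{\gamma}_w$ is represented by $(t^{lw_1},\ldots,t^{lw_d}) \in J^{(g)}_\infty(\mathbb{A}^d)'$.

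Next, for $n > \max_i w_i$, I would describe $\pi_n(\tilde{\gamma}_w \cdot J_\infty(T))$ explicitly. Multiplying $(t^{lw_1},\ldots,t^{lw_d})$ componentwise by elements of $J^{(l)}_\infty(T)$ and truncating modulo $t^{nl+1}$, the preimage in $J^{(g)}_{nl}(\mathbb{A}^d)$ of this image is the locus of tuples $(\sum_j \alpha_{i,j}t^j)_i$ satisfying $\alpha_{i,j} = 0$ unless $j \equiv lq_i \pmod{l}$ and $j \ge lw_i$, with $\alpha_{i,lw_i} \in \mathbb{C}^*$. A direct count shows there are $N_i := n - \lambda_i$ free parameters in coordinate $i$ when $q_i > 0$, and $N_i := n - \lambda_i + 1$ when $q_i = 0$, with the leading coefficient lying in $\mathbb{C}^*$. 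Thus this preimage is isomorphic to $(\mathbb{C}^*)^d \times \mathbb{A}^{\sum_i(N_i-1)}$, with Hodge polynomial $(uv-1)^d(uv)^{\sum_i(N_i-1)}$.

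The main obstacle is descending from this upstairs count to the Hodge polynomial of the actual quotient by $N(\sigma) \hookrightarrow (\mathbb{C}^*)^d$, which acts by scaling every coefficient $\alpha_{i,j}$ of the $i$-th coordinate by the same character $\zeta_i$. The key trick is a change of coordinates: replace each non-leading entry $\alpha_{i,j}$ (for $j > lw_i$) by the $N(\sigma)$-invariant ratio $\alpha_{i,j}/\alpha_{i,lw_i}$, keeping the leading entries untouched. After this the action survives only on the leading torus factor, so the quotient is isomorphic to $[(\mathbb{C}^*)^d/N(\sigma)] \times \mathbb{A}^{\sum_i(N_i-1)}$. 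Since $N(\sigma)$ is a finite subgroup of the $d$-dimensional algebraic torus $(\mathbb{C}^*)^d$, the quotient $(\mathbb{C}^*)^d/N(\sigma)$ is again a $d$-dimensional algebraic torus, with Hodge polynomial $(uv-1)^d$. Hence the Hodge polynomial of $\pi_n(\tilde{\gamma}_w \cdot J_\infty(T)) \subseteq |\mathcal{J}_n\mathcal{X}|$ equals the upstairs polynomial $(uv-1)^d(uv)^{\sum_i(N_i-1)}$.

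Finally, I would finish with the arithmetic. Since $\sigma$ is simplicial, $\{i : q_i > 0\} = \{i : \rho_i \subseteq \sigma(\{w\})\}$ has cardinality $\dim\sigma(\{w\})$; and since $\psi(b_i) = 1$ gives $\sum_i \lambda_i = \psi(w) - \psi(\{w\})$, one obtains $\sum_i(N_i - 1) = nd - \psi(w) + \psi(\{w\}) - \dim\sigma(\{w\})$. Substituting into $\mu_\mathcal{X}(A) = E_{\pi_n(A)}(u,v)(uv)^{-nd}$ then gives the claimed formula.
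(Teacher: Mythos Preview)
Your proposal is correct and follows essentially the same approach as the paper: work in the chart $J_{nl}^{(g)}(\mathbb{A}^d)$, describe the truncated orbit explicitly as $(\mathbb{C}^*)^d \times \mathbb{A}^{nd - \sum_i \lambda_i - \dim\sigma(\{w\})}$, argue that quotienting by $N(\sigma)$ leaves the Hodge polynomial unchanged, and finish with the same arithmetic. The only minor difference is in the quotient step: the paper stratifies $(\mathbb{C}^*)^d \times \mathbb{A}^M$ into tori and observes the stratification is $N(\sigma)$-stable, whereas you use the ratio-coordinates $\alpha_{i,j}/\alpha_{i,lw_i}$ to trivialise the action on the affine factor and then invoke that $(\mathbb{C}^*)^d/N(\sigma)$ is again a torus---but this is the same underlying fact packaged slightly differently.
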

\begin{proof}
Fix a $d$-dimensional cone $\sigma$ containing $w$ and consider the notation of (\ref{MickyO}).
The twisted arc $\tilde{\gamma}_{w}$ lies in 
$J_{\infty}^{(g)}\mathbb{A}^{d}/N(\sigma)$ and $\pi_{n}(\tilde{\gamma}_{w})$ lies in $J_{nl}^{(g)}\mathbb{A}^{d}/N(\sigma)$.
If we consider the natural projection
$\pi_{n}^{(g)}: J_{\infty}^{(g)}\mathbb{A}^{d} \rightarrow J_{nl}^{(g)}\mathbb{A}^{d}$,
then $\pi_{n}^{(g)}( (t^{lw_{1}}, \ldots, t^{lw_{d}}) \cdot J_{\infty}^{(l)}(T))$ is equal to 
\begin{equation*}
 \{ ( \sum_{k = \lambda_{1}}^{n_{1}} \alpha_{1, l(k + q_{1})} t^{l(k + q_{1})}, \ldots, 
 \sum_{k = \lambda_{d}}^{n_{d}} \alpha_{d, l(k + q_{d})} t^{l(k + q_{d})}  ) \mid \alpha_{i, w_{i}} \neq 0  \},
 \end{equation*}
for $n \geq \max \{ \lambda_{1} + 1, \ldots, \lambda_{d} + 1 \}$. Here $n_{i} = n$ if $q_{i} =0$ and $n_{i} = n -1$ if $q_{i} \neq 0$.
Note that $q_{i} \neq 0$ if and only if $\rho_{i} \subseteq \sigma(\{w\})$. Hence 
$(\pi_{n}^{(g)})^{-1}\pi_{n}^{(g)}( (t^{lw_{1}}, \ldots, t^{lw_{d}}) \cdot J_{\infty}^{(l)}(T)) = (t^{lw_{1}}, \ldots, t^{lw_{d}}) \cdot J_{\infty}^{(l)}(T)$,
and  $\pi_{n}^{(g)}( (t^{lw_{1}}, \ldots, t^{lw_{d}}) \cdot J_{\infty}^{(l)}(T))$ 
is isomorphic to 
\begin{equation}\label{cmon}
(\mathbb{C}^{*})^{d} \times \mathbb{A}^{       nd - \sum_{i = 1}^{d}\lambda_{i} - \dim \sigma(\{w\})              }. 
\end{equation}
Note that (\ref{cmon}) has a decomposition into a disjoint union of locally closed subspaces, each isomorphic to $(\mathbb{C}^{*})^{i}$ for some $i$, which is preserved after quotienting by the induced action of the finite group $N(\sigma)$. 
The result follows by considering the corresponding spaces and maps after quotienting by $N(\sigma)$, 
and observing that this doesn't affect the 
Hodge polynomial of (\ref{cmon}).
\end{proof}

Let $F: |\mathcal{J}_{\infty}\mathcal{X}|' \rightarrow \mathbb{Q}$ be a $J_{\infty}(T)$-invariant function on the space of twisted arcs of $\mathcal{X}$ and
let $A$ be a $J_{\infty}(T)$-invariant subset of  $|\mathcal{J}_{\infty}\mathcal{X}|$. By Theorem \ref{decomp}, we have a decomposition
\begin{equation*}
A \cap |\mathcal{J}_{\infty}\mathcal{X}|' =  \coprod_{\substack{ w \in |\Sigma| \cap N \\ \tilde{\gamma}_{w} \in A }} \tilde{\gamma}_{w} \cdot J_{\infty}(T).
\end{equation*}
We define 
\begin{equation*}
\int_{A} (uv)^{F} d\mu_{\mathcal{X}} := \sum_{\substack{ w \in |\Sigma| \cap N \\ \tilde{\gamma}_{w} \in A }} \mu_{\mathcal{X}}(\tilde{\gamma}_{w} \cdot J_{\infty}(T)) (uv)^{F(\tilde{\gamma}_{w})},
\end{equation*}
when the right hand sum is a well-defined element in $\mathbb{Z}[(uv)^{1/N}][[(uv)^{-1/N}]]$, for some positive integer $N$. With the definitions of Yasuda, this is the \emph{motivic integral} of $F$ over $A$.\footnote{To check the definitions 
agree, we can replace $A$ by $A \cap |\mathcal{J}_{\infty}\mathcal{X}|'$, by Propositions 3.24 and 3.25 in \cite{YasMotivic}. Now apply the decomposition of Theorem \ref{decomp}.}

Recall the involution $\iota: \BOX(\bSigma) \rightarrow \BOX(\bSigma)$ (Remark (\ref{ginvo})).
Following \cite{YasMotivic}, we define the \emph{shift function} $s_{\mathcal{X}}: |\mathcal{J}_{\infty}\mathcal{X}|' \rightarrow \mathbb{Q}$ by
\begin{equation*}
s_{\mathcal{X}}(\tilde{\gamma}_{w} \cdot J_{\infty}(T)) = \dim \sigma(\{w\}) - \psi(\{ w \}) = \psi(\iota(\{w\})). 
\end{equation*}
The shift function factors as 
$|\mathcal{J}_{\infty}\mathcal{X}|' \stackrel{\pi}{\rightarrow} |\mathcal{J}_{0}\mathcal{X}| = |\mathcal{I}\mathcal{X}| 
\stackrel{\sft}{\rightarrow} \mathbb{Q}$,
where the function $\sft$ is constant on the 
connected components of $|\mathcal{I} \mathcal{X}|$.
Borisov, Chen and Smith \cite{BCSOrbifold} showed that the connected components of $|\mathcal{I}\mathcal{X}|$ are indexed by $\BOX(\bSigma)$ (c.f. Remark \ref{doko}). 
The value of $\sft$ on the component of $|\mathcal{I} \mathcal{X}|$ 
corresponding to $g$ in $N(\sigma)$ is the age of $g^{-1}$ (see (\ref{age}) and Remark \ref{ginvo}). 
 
Consider a Kawamata log terminal pair $(\mathcal{X}, \mathcal{E})$, where $\mathcal{E}$ is a $T$-invariant $\mathbb{Q}$-divisor on $\mathcal{X}$ (see Section \ref{contact}). 
Following Yasuda (Definition 4.1 \cite{YasMotivic}), we consider the invariant $\Gamma(\mathcal{X}, \mathcal{E})$ defined by
\begin{equation*}
\Gamma(\mathcal{X}, \mathcal{E}) = \int_{|\mathcal{J}_{\infty}\mathcal{X}|} (uv)^{s_{\mathcal{X}} + \ord \mathcal{E}} d\mu_{\mathcal{X}}.
\end{equation*}
Yasuda showed that $\Gamma(\mathcal{X}, \mathcal{E})$ is a well-defined element in $\mathbb{Z}[(uv)^{1/N}][[(uv)^{-1/N}]]$, for some positive integer $N$ \cite[Proposition 4.15]{YasMotivic}.
Our goal is to compute such invariants and give them a combinatorial interpretation. We first recall the notion of weighted $\delta$-vector from \cite{YoWeightI}. 
Consider the polyhedral complex
$Q = \{ v \in |\Sigma| \, | \, \psi(v) \leq 1 \}$
and let  $\lambda: |\Sigma| \rightarrow \mathbb{R}$ be a piecewise $\mathbb{Q}$-linear function satisfying $\lambda(b_{i}) > - 1$ for $i = 1, \ldots, r$.
The \emph{weighted $\delta$-vector} $\delta^{\lambda}(t)$ is defined by
\[
\delta^{\lambda}(t) = (1  - t)^{d + 1}(1 +  \sum_{m \geq 1} \sum_{v \in mQ \cap N} t^{\psi(v) - \lceil \psi(v) \rceil + \lambda(v) + m}).
\]
The following symmetry property was established in Corollary 2.13 of \cite{YoWeightI}.
\begin{theorem}[\cite{YoWeightI}]\label{stu}
With the notation above, the expression $\delta^{\lambda}(t)$ is a rational function in $\mathbb{Q}(t^{1/N})$, for some positive integer $N$.  
If $\Sigma$ is a complete fan, 
then 
\begin{equation*}
\delta^{\lambda}(t) = t^{d} \delta^{\lambda}(t^{-1}).
\end{equation*}
\end{theorem}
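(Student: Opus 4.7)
The plan is first to simplify the generating function, then to prove rationality cone by cone, and finally to deduce the symmetry via Stanley reciprocity combined with an Euler--Poincar\'e identity on the star fan at each cone.

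\textbf{Simplification.} Swap the order of summation in the definition of $\delta^{\lambda}(t)$. For fixed nonzero $v \in |\Sigma| \cap N$, the condition $v \in mQ$ is equivalent to $m \geq \lceil \psi(v) \rceil$, so the inner geometric series in $t^{m}$ telescopes: the $v$-contribution equals $t^{(\psi+\lambda)(v)}/(1-t)$. Handling $v=0$ and the initial $+1$ separately, one obtains the clean formula
\[
\delta^{\lambda}(t) = (1 - t)^{d} \sum_{v \in |\Sigma| \cap N} t^{L(v)}, \qquad L := \psi + \lambda.
\]
Note that $L(b_{i}) = 1 + \lambda(b_{i}) > 0$ and that $L$ is $\mathbb{Q}$-linear on each cone of $\Sigma$. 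To establish rationality I would decompose $|\Sigma|\cap N = \coprod_{\sigma \in \Sigma}(\relint\sigma \cap N)$ and apply the fundamental parallelepiped decomposition: for each $\sigma \in \Sigma$,
\[
\sum_{v \in \relint(\sigma) \cap N} t^{L(v)} = \frac{\sum_{v_{0} \in \BOX(\bsigma)} t^{L(v_{0})}}{\prod_{\rho_{i} \subseteq \sigma}\bigl(1 - t^{L(b_{i})}\bigr)}.
\]
Since $L(b_{i}), L(v_{0}) \in \mathbb{Q}$, each summand is a rational function in $t^{1/N}$ for a common denominator $N$, and so is $\delta^{\lambda}(t)$.

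\textbf{Symmetry for complete $\Sigma$.} The engine is Stanley reciprocity at each simplicial rational cone $\sigma$ of dimension $k$:
\[
\sum_{v \in \sigma \cap N} t^{-L(v)} = (-1)^{k} \sum_{v \in \relint(\sigma)\cap N} t^{L(v)}.
\]
I would prove this by applying the involution $v_{0} = \sum q_{i} b_{i_{j}} \mapsto \sum(1-q_{i})b_{i_{j}}$ on $\BOX(\bsigma)$ (which sends $L(v_{0})$ to $\sum L(b_{i_{j}}) - L(v_{0})$) together with the identity $1 - t^{-L(b)} = -t^{-L(b)}(1 - t^{L(b)})$ applied to each factor of the denominator, which supplies exactly $k$ signs. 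Writing $S(\sigma;t) = \sum_{v \in \sigma \cap N} t^{L(v)} = \sum_{\tau \preceq \sigma} S^{\circ}(\tau;t)$ where $S^{\circ}(\tau;t)$ denotes the interior sum, summing the reciprocity identities over all cones and exchanging the order of summation yields
\[
\sum_{v \in N} t^{-L(v)} = \sum_{\tau \in \Sigma} \Bigl(\sum_{\sigma \supseteq \tau} (-1)^{\dim \sigma}\Bigr)\, S^{\circ}(\tau;t).
\]
The star of $\tau$ is a complete fan in $(N/N_{\tau})_{\mathbb{R}}$ of dimension $d - \dim\tau$, so by the Euler--Poincar\'e computation $\sum_{\sigma' \text{ in complete fan}}(-1)^{\dim \sigma'} = (-1)^{\dim \text{fan}}$ (derived from $\chi(S^{d-\dim\tau - 1})$), the bracketed sum equals $(-1)^{\dim\tau}\cdot(-1)^{d-\dim\tau} = (-1)^{d}$, independent of $\tau$. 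Therefore $\sum_{v \in N} t^{-L(v)} = (-1)^{d}\sum_{v \in N} t^{L(v)}$. Since $t^{d}(1-t^{-1})^{d} = (-1)^{d}(1-t)^{d}$, multiplying by $(1-t)^{d}$ gives exactly $\delta^{\lambda}(t) = t^{d}\delta^{\lambda}(t^{-1})$.

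\textbf{Main obstacle.} The telescoping in the first step and the Stanley reciprocity for a single cone are routine. The genuine subtlety is bookkeeping the contributions of cones of all dimensions and invoking the correct Euler--Poincar\'e identity on each star fan; this is where completeness of $\Sigma$ is used decisively, and without it one gets a formula with a nontrivial error term supported on the boundary of $|\Sigma|$. A secondary technical point is that the sums $\sum_{v \in \sigma \cap N} t^{L(v)}$ are manipulated as rational functions (with different domains of convergence for $\sigma$ and $-\sigma$); one must consistently regard them as elements of a common field of fractions, which the rationality argument in the second step supplies.
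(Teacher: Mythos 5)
This statement is not proved in the present paper: it is quoted from Corollary~2.13 of \cite{YoWeightI}, and the surrounding text only records consequences (the identities (\ref{mccoy}) and (\ref{blue})) for later use in Theorem~\ref{tropo}. There is therefore no internal proof to compare against, but your argument is a correct and fairly standard combinatorial route to the result. The telescoping reduction $\delta^\lambda(t) = (1-t)^d \sum_{v \in |\Sigma| \cap N} t^{(\psi + \lambda)(v)}$ is right (including the handling of $v=0$ and the extra $1$), rationality then reduces to the rationality of cone generating functions, and the functional equation follows from Stanley reciprocity applied cone-by-cone together with $\sum_{\sigma \supseteq \tau} (-1)^{\dim \sigma} = (-1)^{d}$, which is precisely where completeness enters via $\chi(S^{d - \dim\tau - 1})$. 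This is the same mechanism underlying the expansion (\ref{mccoy}) over $\tau \in \Sigma$ and $\BOX(\btau)$; you have organized the bookkeeping around Stanley reciprocity rather than around the weighted $h$-vectors $h_{\tau}^{\lambda}$, and both routes carry the same content.

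One genuine error to fix: the identity
\begin{equation*}
\sum_{v \in \sigma^{\circ} \cap N} t^{L(v)} = \frac{\sum_{v_{0} \in \BOX(\bsigma)} t^{L(v_{0})}}{\prod_{\rho_{i} \subseteq \sigma}\bigl(1 - t^{L(b_{i})}\bigr)}
\end{equation*}
(with $\sigma^{\circ}$ the relative interior) does not hold with the paper's $\BOX(\bsigma)$, which is the \emph{open} parallelepiped $\{\sum q_{i} b_{i} : 0 < q_{i} < 1\}$. The correct numerator for the relative interior ranges over the half-open box $0 < q_{i} \leq 1$, and that for the closed cone over $0 \leq q_{i} < 1$; these can strictly contain $\BOX(\bsigma)$ when the parallelepiped has boundary lattice points, in which case your rational functions are literally wrong. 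Likewise the reciprocity involution pairs the two half-open boxes with each other via $q_{i} \mapsto 1 - q_{i}$, not $\BOX(\bsigma)$ with itself. The fix is purely cosmetic and does not disturb anything downstream — the single-cone reciprocity, the Euler--Poincar\'e evaluation on the complete star fans, and the final identity $t^{d}(1-t^{-1})^{d} = (-1)^{d}(1-t)^{d}$ all survive unchanged — but as written the parallelepiped step needs the half-open boxes, not $\BOX(\bsigma)$.
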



For each cone $\tau$ in $\Sigma$, let $N_{\tau}$ be the sublattice of $N$ generated by $\{ b_{i} \mid \rho_{i} \subseteq \tau \}$ and let $\Sigma_{\tau}$ be the simplicial fan in $(N/N_{\tau})_{\mathbb{R}}$ with cones given by the projections 
of the cones in $\Sigma$ containing $\tau$. 
We define $h_{\tau}^{\lambda}(t)$ to be the expression
\begin{equation*}
h_{\tau}^{\lambda}(t) =  \sum_{\tau \subseteq \sigma}t^{\sum_{\rho_{i} \subseteq \sigma \backslash \tau} \lambda(b_{i})}  t^{\dim \sigma - \dim \tau}(1 - t)^{\codim \sigma} 
 \prod_{\rho_{i} \subseteq \sigma \backslash \tau} (1 - t)/(1 - t^{\lambda(b_{i})+ 1}),
\end{equation*}
so that when $\lambda \equiv 0$, $h_{\tau}^{\lambda}(t)$ is the $h$-vector of $\Sigma_{\tau}$ (see, for example, \cite{StaEnumerative}). 
By expanding and collecting terms, we have  
\begin{equation}\label{ditch}
t^{\codim \tau} h_{\tau}^{\lambda}(t^{-1}) = 
(t - 1)^{\codim \tau} \sum_{\tau \subseteq \sigma}  \prod_{\rho_{i} \subseteq \sigma \backslash \tau} 1 /(t^{\lambda(b_{i})+ 1} - 1).
\end{equation}
The following expression for the weighted $\delta$-vector was 
established in Proposition 2.7 of \cite{YoWeightI},
\begin{equation}\label{mccoy}
\delta^{\lambda}(t)  = \sum_{\tau \in \Sigma} h_{\tau}^{\lambda}(t) \sum_{ v \in \BOX(\btau)} t^{\psi(v) + \lambda(v)}
\prod_{\rho_{i} \subseteq \tau} (t -1)/(t^{\lambda(b_{i}) + 1} -1) .
\end{equation}
After rearranging we see that 
\begin{equation}\label{blue}
\delta^{\lambda}(t^{-1}) = \sum_{\tau \in \Sigma} h_{\tau}^{\lambda}(t^{-1}) \sum_{ v \in \BOX(\btau)} t^{- \psi(v) - \lambda(v) + \sum_{\rho_{i} \subseteq \tau}\lambda(b_{i}) }
\prod_{\rho_{i} \subseteq \tau} (t -1)/(t^{\lambda(b_{i}) + 1} -1) .
\end{equation}



\begin{theorem}\label{tropo}
If $(\mathcal{X}, \mathcal{E})$ is a Kawamata log terminal pair, where $\mathcal{E}$ is a $T$-invariant $\mathbb{Q}$-divisor on $\mathcal{X}$, 
then the corresponding piecewise $\mathbb{Q}$-linear function $\lambda = \psi_{q_{*}\mathcal{E}}$
satisfies $\lambda(b_{i}) > - 1$ for $i = 1, \ldots, r$, and
\begin{equation*}
\Gamma(\mathcal{X}, \mathcal{E}) = (uv)^{d} \delta^{\lambda}((uv)^{-1}).
\end{equation*}
In particular, $\Gamma(\mathcal{X}, \mathcal{E})$  is a rational function in $\mathbb{Q}(t^{1/N})$, for some positive integer $N$. 
If $\Sigma$ is a complete fan, then
\begin{equation*}
\Gamma(\mathcal{X}, \mathcal{E})(u,v) = (uv)^{d}  \Gamma(\mathcal{X}, \mathcal{E})(u^{-1}, v^{-1}) =  \delta^{\lambda}(uv).
\end{equation*}
Moreover, every weighted $\delta$-vector has the form $\delta^{\lambda}(uv) = (uv)^{d}\Gamma(\mathcal{X}, \mathcal{E})(u^{-1}, v^{-1})$, for some such
pair $(\mathcal{X}, \mathcal{E})$.
\end{theorem}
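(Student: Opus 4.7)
The plan is to compute $\Gamma(\mathcal{X}, \mathcal{E})$ orbit-by-orbit using Theorem \ref{decomp}, and to recognise the resulting sum as a rewriting of $(uv)^{d}\delta^{\lambda}((uv)^{-1})$.

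The inequality $\lambda(b_{i}) > -1$ for $\lambda = \psi_{q_{*}\mathcal{E}}$ is a direct restatement of the Kawamata log terminal condition from Section \ref{contact}. For the main identity, the footnote to the definition of the motivic integral allows us to replace $|\mathcal{J}_{\infty}\mathcal{X}|$ by $|\mathcal{J}_{\infty}\mathcal{X}|'$, and Theorem \ref{decomp} then yields
\begin{equation*}
\Gamma(\mathcal{X}, \mathcal{E}) = \sum_{w \in |\Sigma| \cap N}\mu_{\mathcal{X}}(\tilde{\gamma}_{w} \cdot J_{\infty}(T))\,(uv)^{s_{\mathcal{X}}(\tilde{\gamma}_{w}) + \ord \mathcal{E}(\tilde{\gamma}_{w})}.
\end{equation*}
Substituting Lemma \ref{orbitz} for the measure, Lemma \ref{order} (giving $\ord \mathcal{E}(\tilde{\gamma}_{w}) = -\lambda(w)$), and the definition $s_{\mathcal{X}}(\tilde{\gamma}_{w}) = \dim\sigma(\{w\}) - \psi(\{w\})$, the $\psi(\{w\})$ and $\dim\sigma(\{w\})$ terms cancel, leaving
\begin{equation*}
\Gamma(\mathcal{X}, \mathcal{E}) = (uv-1)^{d}\sum_{w \in |\Sigma| \cap N}(uv)^{-\psi(w)-\lambda(w)}.
\end{equation*}
On any cone $\sigma$ containing $w = \sum q_{i}b_{i}$, one has $\psi(w) + \lambda(w) = \sum q_{i}(1 + \lambda(b_{i}))$ with all coefficients strictly positive, so the sum is a well-defined element of $\mathbb{Z}[(uv)^{1/N}][[(uv)^{-1/N}]]$.

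Next I would rewrite $\delta^{\lambda}(t)$ in closed form by swapping the order of summation in its defining series: for each nonzero $w \in |\Sigma| \cap N$, the indices $m$ with $w \in mQ$ are precisely $m \geq \lceil \psi(w) \rceil$, so the inner sum becomes the geometric series $t^{\psi(w)+\lambda(w)}/(1-t)$, while the constant $1$ together with the $w = 0$ stratum contribute $1/(1-t)$. Hence
\begin{equation*}
\delta^{\lambda}(t) = (1-t)^{d}\sum_{w \in |\Sigma| \cap N}t^{\psi(w)+\lambda(w)}.
\end{equation*}
Setting $t = (uv)^{-1}$ and multiplying by $(uv)^{d}$ produces exactly the expression above for $\Gamma(\mathcal{X}, \mathcal{E})$, which proves the central identity. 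Rationality is then immediate from Theorem \ref{stu}, and the symmetry statement for complete $\Sigma$ follows by combining the identity with the functional equation $\delta^{\lambda}(t) = t^{d}\delta^{\lambda}(t^{-1})$ of that theorem.

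For the final claim, given any piecewise $\mathbb{Q}$-linear $\lambda$ with $\lambda(b_{i}) > -1$, set $\mathcal{E} = \sum_{i}\beta_{i}\mathcal{D}_{i}$ with $\beta_{i} = -\lambda(b_{i}) < 1$. Using $q_{*}\mathcal{D}_{i} = a_{i}^{-1}D_{i}$ one checks $\psi_{q_{*}\mathcal{E}}(b_{i}) = \lambda(b_{i})$, and since both functions are piecewise $\mathbb{Q}$-linear on the simplicial fan $\Sigma$, they agree globally. The main obstacle is the careful reorganisation of the defining series of $\delta^{\lambda}$: one must correctly absorb the $w = 0$ stratum and dispose of the $\lceil\cdot\rceil$ term in the exponent to extract the clean closed form. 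Once this rewriting is available, every remaining assertion reduces to a substitution or a direct application of Theorem \ref{stu}.
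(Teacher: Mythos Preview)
Your argument is correct. Both you and the paper arrive at the identity
\[
\Gamma(\mathcal{X},\mathcal{E}) = (uv-1)^{d}\sum_{w \in |\Sigma|\cap N}(uv)^{-\psi(w)-\lambda(w)}
\]
via Theorem \ref{decomp}, Lemma \ref{orbitz}, and Lemma \ref{order}, and both conclude the symmetry and the converse statement in the same way. The difference lies in how the right-hand side is matched with $(uv)^{d}\delta^{\lambda}((uv)^{-1})$. The paper decomposes each $w$ as $\{w\} + w' + \sum_{\rho_{i}\subseteq\sigma(w)\smallsetminus\sigma(\{w\})}b_{i}$ and then appeals to the structural formulas (\ref{ditch}) and (\ref{blue}) for $\delta^{\lambda}$ imported from \cite{YoWeightI}. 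You instead interchange the two sums in the defining series of $\delta^{\lambda}$, use that $w\in mQ$ iff $m\ge\lceil\psi(w)\rceil$, and collapse the resulting geometric series to obtain $\delta^{\lambda}(t)=(1-t)^{d}\sum_{w}t^{\psi(w)+\lambda(w)}$ directly. Your route is shorter and self-contained, avoiding any reliance on the formulas quoted from \cite{YoWeightI}; the paper's route has the advantage of making explicit the cone-by-cone structure of $\Gamma(\mathcal{X},\mathcal{E})$, which is what yields the stacky analogue of Batyrev's simple-normal-crossing formula mentioned in the subsequent remark.
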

\begin{proof}
We showed in Lemma \ref{order} that 
$\ord \mathcal{E}( \tilde{\gamma}_{w} \cdot J_{\infty}(T) ) = -\lambda(w)$ and
hence 
$(s_{\mathcal{X}} + \ord \mathcal{E})( \tilde{\gamma}_{w} \cdot J_{\infty}(T)) =  \dim \sigma(\{w\}) - \psi(\{ w \}) -\lambda(w)$.
Using Lemma \ref{orbitz}, we compute
\[
\Gamma(\mathcal{X}, \mathcal{E}) = (uv - 1)^{d} \sum_{w \in |\Sigma| \cap N} (uv)^{-\psi(w) -\lambda(w) }.
\]
For each $w$ in $|\Sigma| \cap N$, we have a unique decomposition
\begin{equation*}
w = \{w\} + w' + \sum_{\rho_{i} \subseteq \sigma(w) \smallsetminus \sigma(\{w\})} b_{i},
\end{equation*}
where $w'$ is a non-negative linear combination of $\{ b_{i} \mid \rho_{i} \subseteq \sigma(w) \}$. 
Here $\sigma(w)$ is the cone containing $w$ in its relative interior. 
Using this decomposition, 
we compute the following expression for $\Gamma(\mathcal{X}, \mathcal{E})$,
\[
\sum_{\substack{ \tau \in \Sigma \\    v \in \BOX(\btau)            } } (uv - 1)^{d}(uv)^{-(\psi + \lambda)(v)}
\sum_{\tau \subseteq \sigma} (uv)^{- \sum_{\rho_{i} \subseteq \sigma \smallsetminus \tau} (\lambda(b_{i}) + 1)}
\prod_{\rho_{i} \subseteq \sigma} 1/( 1 - (uv)^{-\lambda(b_{i}) - 1}  ).
\]
Rearranging and using (\ref{ditch}) gives 
\begin{align*}
\Gamma(\mathcal{X}, \mathcal{E})& = \sum_{\tau \in \Sigma} (uv)^{\codim \tau}h_{\tau}^{\lambda}((uv)^{-1})  \times \\
&\sum_{ v \in \BOX(\btau)} (uv)^{\sum_{\rho_{i} \subseteq \tau}\lambda(b_{i}) + \dim \tau - \psi(v) -   \lambda(v)} 
\prod_{\rho_{i} \subseteq \tau} (uv -1)/((uv)^{\lambda(b_{i}) + 1} -1). 
\end{align*}
Comparing with (\ref{blue}) gives $\Gamma(\mathcal{X}, \mathcal{E}) = (uv)^{d} \delta^{\lambda}((uv)^{-1})$. The second statement follows from Theorem \ref{stu}. 
If  $\lambda: |\Sigma| \rightarrow \mathbb{R}$ is a piecewise $\mathbb{Q}$-linear function satisfying $\lambda(b_{i}) > - 1$ for $i = 1, \ldots, r$, then we may consider the 
corresponding $T$-invariant $\mathbb{Q}$-divisor $E$ on $X$.  The pair $(\mathcal{X}, q^{*}E)$ is a Kawamata log terminal pair and, by the above argument, 
$\delta^{\lambda}(uv) = (uv)^{d}  \Gamma(\mathcal{X}, q^{*}E)(u^{-1}, v^{-1})$. Hence every weighted $\delta$-vector corresponds to a motivic integral on $\mathcal{X}$. 
\end{proof}

\begin{rem}
By Theorem \ref{tropo} and (\ref{blue}), we obtain a formula for  the invariant $\Gamma(\mathcal{X}, \mathcal{E})$. This formula is a stacky analogue of
Batyrev's formula for the motivic integral of a simple normal crossing divisor on a smooth variety (see \cite{VeyArc}, \cite{BatNon}).
\end{rem}

\begin{rem}
When $\mathcal{E} = 0$, $\Gamma(\mathcal{X}, 0)$ is a polynomial in $uv$ of degree $d$ and the coefficient of $(uv)^{j}$
is equal to the dimension of the $2j^{\textrm{th}}$ orbifold cohomology
group of $\mathcal{X}$ with compact support \cite{YasTwisted}.
When $\Sigma$ is complete, the symmetry 
$\Gamma(\mathcal{X}, 0)(u,v) = (uv)^{d}  \Gamma(\mathcal{X}, 0)(u^{-1}, v^{-1})$ is a consequence of Poincar\'e duality for orbifold cohomology \cite{CRNew}. 
\end{rem}

\section{The Transformation Rule}\label{transform}

A morphism $f: \mathcal{Y} \rightarrow \mathcal{X}$ of smooth Deligne-Mumford stacks is \emph{birational} if there exist
open, dense substacks $\mathcal{Y}_{0}$ of $\mathcal{Y}$ and $\mathcal{X}_{0}$ of $\mathcal{X}$ such that $f$ induces an isomorphism 
$\mathcal{Y}_{0} \cong \mathcal{X}_{0}$ (Definition 3.36 \cite{YasMotivic}). If $f$ is proper and birational then Yasuda \cite{YasMotivic} proved a transformation rule 
relating motivic integrals on $\mathcal{X}$ 
to motivic integrals
on $\mathcal{Y}$, generalising a classic result of Kontsevich for smooth varieties (see, for example, \cite{VeyArc}).  The goal of this section is to interpret the transformation rule in our context in order to give a geometric proof of a combinatorial result in \cite{YoWeightI}. 

If $\btriangle = (N, \triangle, \{ \bar{b}_{i} \})$ and $\bSigma = (N, \Sigma, \{ b_{i} \} )$ are stacky fans, we say that $\btriangle$ \emph{refines} $\bSigma$
if 
\begin{enumerate}
\item The fan $\triangle$ refines $\Sigma$ in $N_{\mathbb{R}}$, 
\item\label{hoy}  For any ray $\bar{\rho}_{i}$ in $\triangle$, $\bar{b}_{i}$ is an integer combination of the lattice points $\{ b_{j} \mid \rho_{j} \subseteq \sigma \}$, where $\sigma$ is any cone of 
$\Sigma$ containing $\bar{\rho}_{i}$. 
\end{enumerate}
If $\btriangle$ \emph{refines} $\bSigma$, we have an induced morphism of toric stacks $f: \mathcal{X}(\btriangle) \rightarrow \mathcal{X}(\bSigma)$ 
(Remark 4.5 \cite{BCSOrbifold}) such that $f$ restricts to the identity map on the torus and hence is birational. Note that the corresponding map of coarse moduli spaces is proper \cite{FulIntroduction}. 
Since the morphism from a Deligne-Mumford stack to its coarse moduli space is proper \cite{KMQuotients}, it follows that $f$ is proper \cite{LMBChamps}.
We will give a local description of $f$.
Let $\tau$ be a maximal cone in 
$\triangle$, contained in a maximal cone $\sigma$ of $\Sigma$. By Property  (\ref{hoy}),  we have an inclusion of lattices
$N_{\tau} \hookrightarrow N_{\sigma}$,
inducing a homomorphism of groups
$j: N(\tau) \rightarrow N(\sigma)$.
Let $\tau'$ be the cone generated by $\{ \bar{b}_{i} \mid \bar{\rho}_{i} \subseteq \tau \}$ in $N_{\tau}$ and 
let $\sigma'$ be the cone generated by $\{ b_{i} \mid \rho_{i} \subseteq \sigma \}$ in $N_{\sigma}$. The inclusion
$\tau' \cap N_{\tau} \hookrightarrow \sigma' \cap N_{\sigma}$
induces a $j$-equivariant map
$\phi: \Spec \mathbb{C}[ \check{\tau}' \cap M_{\tau}   ]  \rightarrow \Spec \mathbb{C}[ (\sigma')^{\vee} \cap M_{\sigma}   ]$.  
Taking stacky quotients of both sides yields the restriction of $f$ to $\mathcal{X}(\btau)$,
$f: \mathcal{X}(\btau) = [ \Spec \mathbb{C}[ \check{\tau}' \cap M_{\tau}   ] / N(\tau) ] \rightarrow 
\mathcal{X}(\bsigma) = [ \Spec \mathbb{C}[ (\sigma')^{\vee} \cap M_{\sigma}   ] / N(\sigma) ]$.

Given a morphism $g: \mathcal{Y} \rightarrow \mathcal{Z}$ of Deligne-Mumford stacks, Yasuda described a natural morphism
$g_{\infty}: |\mathcal{J}_{\infty}\mathcal{Y}| \rightarrow |\mathcal{J}_{\infty}\mathcal{Z}|$ (Proposition 2.14 \cite{YasMotivic}). We outline his construction. 
Given a representable morphism
$\gamma: \mathcal{D}^{l}_{\infty, \mathbb{C}} \rightarrow \mathcal{Y}$,
we can consider the composition
$\gamma': \mathcal{D}^{l}_{\infty, \mathbb{C}} \rightarrow \mathcal{Y} \rightarrow \mathcal{X}$.
If $l'$ is a positive integer dividing $l$, then we have a group homomorphism
$\mu_{l} \rightarrow \mu_{l'}$,
$\zeta_{l} \mapsto (\zeta_{l})^{l/l'} = \zeta_{l'}$,
and an equivariant map
$\Spec \mathbb{C}[[t]] \rightarrow \Spec \mathbb{C}[[t]]$, 
$t \mapsto t^{l/l'}$.
Taking stacky quotients of both sides gives a morphism
$\mathcal{D}^{l}_{\infty, \mathbb{C}} \rightarrow \mathcal{D}^{l'}_{\infty, \mathbb{C}}$.
By Lemma 2.15 of \cite{YasMotivic}, there exists a unique positive integer $l'$ dividing $l$ so that $\gamma'$ factors (up to $2$-isomorphism) as 
$\mathcal{D}^{l}_{\infty, \mathbb{C}} \rightarrow \mathcal{D}^{l'}_{\infty, \mathbb{C}} \stackrel{\psi}{\rightarrow} \mathcal{X}$,
where $\psi$ is representable, and then $g_{\infty}(\gamma) = \psi$. 

Let $Y$ be the coarse moduli space of $\mathcal{Y}$ and let $Z$ be the coarse moduli space of $\mathcal{Z}$. Then  
$g: \mathcal{Y} \rightarrow \mathcal{Z}$ induces a morphism $g': Y \rightarrow Z$. 
Note that, by considering coarse moduli spaces, the morphism 
$\mathcal{D}^{l}_{\infty, \mathbb{C}} \rightarrow \mathcal{D}^{l'}_{\infty, \mathbb{C}}$ yields the identity morphism on $\Spec \mathbb{C}[[t]]$. 
It follows that we have a commutative diagram
\begin{equation}\label{Kirk}
\xymatrix{ |\mathcal{J}_{\infty}\mathcal{Y}|  \ar[r]^{g_{\infty}} \ar[d]^{\tilde{\pi}_{\infty} } & 
|\mathcal{J}_{\infty}\mathcal{Z}| \ar[d]^{  \tilde{\pi}_{\infty}   } \\
J_{\infty}(Y) \ar[r]^{g_{\infty}'}  & J_{\infty}(Z) .
}  
\end{equation}

Consider the notation of Theorem \ref{decomp}.    

\begin{lemma}\label{light}
If $\btriangle = (N, \triangle, \{ \bar{b}_{i} \})$ is a stacky fan refining $\bSigma = (N, \Sigma, \{ b_{i} \})$, then the birational morphism 
$f: \mathcal{X}(\btriangle) \rightarrow \mathcal{X}(\bSigma)$
induces a $J_{\infty}(T)$-equivariant map
\begin{equation*}
f_{\infty}: |\mathcal{J}_{\infty}\mathcal{X}(\btriangle)|' \rightarrow |\mathcal{J}_{\infty}\mathcal{X}(\bSigma)|'
\end{equation*}
\begin{equation*}
f_{\infty}(\tilde{\gamma}_{v}) = \tilde{\gamma}_{v}.
\end{equation*}
\end{lemma}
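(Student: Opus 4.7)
The plan is to route the argument through the commutative diagram (\ref{Kirk}) and exploit the bijectivity of $\tilde{\pi}_{\infty}$ on the subsets $|\mathcal{J}_{\infty}\mathcal{X}|'$ established in Theorem \ref{decomp}. Both assertions---$J_{\infty}(T)$-equivariance and the identity $f_{\infty}(\tilde{\gamma}_{v}) = \tilde{\gamma}_{v}$---should follow from their analogues for the underlying toric morphism $f' : X(\triangle) \rightarrow X(\Sigma)$ on coarse moduli, which are essentially trivial.

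For the equivariance, I would first observe that because $\btriangle$ refines $\bSigma$ and the torus $T$ corresponds to the zero cone of both fans, $f$ restricts to the identity automorphism of $T$, and hence to the identity on the open substack $\mathcal{J}_{\infty}^{1} T \cong J_{\infty}(T)$. Since the $J_{\infty}(T)$-action on each side was defined via the open embedding $J_{\infty}(T) \hookrightarrow \mathcal{J}_{\infty}^{l}\mathcal{X}(\bsigma)$ followed by componentwise multiplication of power series in the atlas, the functoriality of Yasuda's construction $g \mapsto g_{\infty}$ (Proposition 2.14 of \cite{YasMotivic}) immediately implies that $f_{\infty}$ commutes with the $J_{\infty}(T)$-actions.

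For the identity $f_{\infty}(\tilde{\gamma}_{v}) = \tilde{\gamma}_{v}$, I would first note that by the explicit formula (\ref{MickyO}) each coordinate of a lift of $\tilde{\gamma}_{v}$ is a nonzero power series, so the generic point of $\tilde{\gamma}_{v}$ lies in $T$; hence the image under $f$ also has generic point in $T$, placing it in $|\mathcal{J}_{\infty}\mathcal{X}(\bSigma)|'$. Applying diagram (\ref{Kirk}) and Theorem \ref{decomp} then yields
\[
\tilde{\pi}_{\infty}(f_{\infty}(\tilde{\gamma}_{v})) \;=\; f'_{\infty}(\tilde{\pi}_{\infty}(\tilde{\gamma}_{v})) \;=\; f'_{\infty}(\gamma_{v}).
\]
The right-hand side is computed directly: the toric morphism $f': U_{\tau} \rightarrow U_{\sigma}$ is dual to the inclusion $\sigma^{\vee} \cap M \hookrightarrow \tau^{\vee} \cap M$, so composition of $\gamma_{v}^{\#} : \chi^{u} \mapsto t^{\langle u, v \rangle}$ with this inclusion gives exactly $\gamma_{v}$ on $X(\Sigma)$. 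Combined with $\tilde{\pi}_{\infty}(\tilde{\gamma}_{v}) = \gamma_{v}$ on the $\bSigma$ side and the injectivity part of Theorem \ref{decomp}, this forces $f_{\infty}(\tilde{\gamma}_{v}) = \tilde{\gamma}_{v}$.

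The main obstacle I would expect is the mismatch between the orders $l_{\btriangle}$ and $l_{\bSigma}$ of the twisted arcs $\tilde{\gamma}_{v}$ on the two sides: the order of the image of $v$ in $N(\tau)$ divides, but need not equal, its order in $N(\sigma)$, so Yasuda's definition of $g_{\infty}$ invokes a nontrivial factorization $\mathcal{D}^{l}_{\infty,\mathbb{C}} \rightarrow \mathcal{D}^{l'}_{\infty,\mathbb{C}} \rightarrow \mathcal{X}(\bSigma)$ to secure representability. Tracking this factorization in the explicit description of Lemma \ref{lemon} would be cumbersome; routing through the commutative diagram (\ref{Kirk}) sidesteps this entirely by pushing the verification onto $\tilde{\pi}_{\infty}$, where it reduces to the elementary arc-space computation above.
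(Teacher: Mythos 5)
Your proposal is correct and takes essentially the same route as the paper: both push the verification through the commutative diagram (\ref{Kirk}) and the $J_{\infty}(T)$-equivariant bijectivity of $\tilde{\pi}_{\infty}$ from Theorem \ref{decomp}, reducing everything to the elementary coordinate-ring computation $f'_{\infty}(\gamma_v) = \gamma_v$ on arc spaces of the coarse moduli spaces. The only difference is cosmetic -- the paper derives both equivariance and the identity uniformly from the diagram, whereas you handle equivariance by a separate (somewhat informal) appeal to functoriality of $g \mapsto g_{\infty}$, even though, as you note yourself, the diagram sidesteps the order-mismatch subtleties more cleanly.
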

\begin{proof}
By considering coarse moduli spaces, $f: \mathcal{X}(\btriangle) \rightarrow \mathcal{X}(\bSigma)$ gives rise to the toric morphism 
$f': X(\triangle) \rightarrow X(\Sigma)$,
with induced map of arc spaces
$f_{\infty}': J_{\infty}(X(\triangle))'  \rightarrow J_{\infty}(X(\Sigma))'$.
Consider the commutative diagram (\ref{Kirk}). 
By Theorem \ref{decomp}, the maps $\tilde{\pi}_{\infty}$ are $J_{\infty}(T)$-equivariant bijections satisfying 
$\tilde{\pi}_{\infty}(\tilde{\gamma}_{v}) = \gamma_{v}$. Hence we only need to show that $f_{\infty}'$ is $J_{\infty}(T)$-equivariant
and satisfies $f_{\infty}'(\gamma_{v}) = \gamma_{v}$. This fact is well-known  but we recall a proof for the convenience of the reader. 
Suppose $v$ lies in a cone $\tau$ of $\triangle$ and let $\sigma$ be a cone in $\Sigma$ containing $\tau$. 
The arc $\gamma_{v}$ corresponds to the ring homomorphism
$\mathbb{C}[\check{\tau} \cap M] \rightarrow \mathbb{C}[[t]]$,
$\chi^{u} \mapsto t^{\langle u, v \rangle}$
and hence $f_{\infty}'(\gamma_{v})$ corresponds to the ring homomorphism
$\mathbb{C}[\sigma^{\vee} \cap M] \hookrightarrow \mathbb{C}[\check{\tau} \cap M] \rightarrow \mathbb{C}[[t]]$,
$\chi^{u} \mapsto \chi^{u} \mapsto t^{\langle u, v \rangle}$.
We see that  $f_{\infty}'$ is $J_{\infty}(T)$-equivariant
and $f_{\infty}'(\gamma_{v}) = \gamma_{v}$. 
\end{proof}

We now state Yasuda's transformation rule in the case of toric stacks.

\begin{theorem}[The Transformation Rule, \cite{YasMotivic}]\label{yap}
Let $\btriangle = (N, \triangle, \{ \bar{b}_{i} \})$ be a stacky fan refining $\bSigma = (N, \Sigma, \{ b_{i} \})$, with corresponding birational morphism 
$f: \mathcal{X}(\btriangle) \rightarrow \mathcal{X}(\bSigma)$.
If $F: |\mathcal{J}_{\infty}\mathcal{X}(\bSigma)|' \rightarrow \mathbb{Q}$ is a $J_{\infty}(T)$-invariant function and
$A$ is a $J_{\infty}(T)$-invariant subset of  
$|\mathcal{J}_{\infty}\mathcal{X}(\bSigma)|'$, 
then
\begin{equation*}
\int_{A} (uv)^{   s_{\mathcal{X}(\Sigma)} + F    } d\mu_{\mathcal{X}} 
= \int_{f_{\infty}^{-1}(A)} (uv)^{s_{\mathcal{X}(\triangle)}  + F \circ f_{\infty} 
- \ord K_{\mathcal{X}(\triangle)/\mathcal{X}(\Sigma)}                  } d\mu_{\mathcal{X}'},
\end{equation*}
where $K_{\mathcal{X}(\triangle)/\mathcal{X}(\Sigma)} = K_{\mathcal{X}(\triangle)} - f^{*}K_{\mathcal{X}(\Sigma)}$.
\end{theorem}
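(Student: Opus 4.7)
The plan is to reduce this to Yasuda's general transformation rule for proper birational morphisms of smooth Deligne-Mumford stacks \cite{YasMotivic}, and then to match the two sides against our explicit description of twisted arcs. First I would verify that $f: \mathcal{X}(\btriangle) \rightarrow \mathcal{X}(\bSigma)$ satisfies the required hypotheses: birationality is immediate from the local description recalled above, since $f$ restricts to the identity on $T$; properness follows because the induced map of coarse moduli spaces $X(\triangle) \rightarrow X(\Sigma)$ is a proper toric morphism \cite{FulIntroduction}, and the coarse moduli map from a smooth Deligne-Mumford stack is itself proper \cite{KMQuotients, LMBChamps}, so that $f$ is proper as noted already in the introduction to this section.

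Next I would identify the relative canonical $\mathbb{Q}$-divisor $K_{\mathcal{X}(\btriangle)/\mathcal{X}(\bSigma)} = K_{\mathcal{X}(\btriangle)} - f^{*}K_{\mathcal{X}(\bSigma)}$ as an explicit $T$-invariant $\mathbb{Q}$-divisor on $\mathcal{X}(\btriangle)$. Using the formula $K_{\mathcal{X}} = -\sum_{i}\mathcal{D}_{i}$ from Section \ref{contact} together with the relations among the $b_{i}$ and $\bar{b}_{i}$ provided by condition (\ref{hoy}) in the definition of a refinement, one checks that the associated piecewise $\mathbb{Q}$-linear function on $|\triangle| = |\Sigma|$ is the difference of the functions $\psi$ associated respectively to $\btriangle$ and $\bSigma$. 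By Lemma \ref{order} this yields the concrete formula
\begin{equation*}
\ord K_{\mathcal{X}(\btriangle)/\mathcal{X}(\bSigma)}(\tilde{\gamma}_{v} \cdot J_{\infty}(T)) = \psi_{\bSigma}(v) - \psi_{\btriangle}(v)
\end{equation*}
on each orbit, where I temporarily decorate $\psi$ with a subscript to distinguish the two stacky fans. In particular the integrand on the right-hand side is a well-defined $J_{\infty}(T)$-invariant function on $f_{\infty}^{-1}(A) \cap |\mathcal{J}_{\infty}\mathcal{X}(\btriangle)|'$.

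Finally, Lemma \ref{light} guarantees that $f_{\infty}$ is $J_{\infty}(T)$-equivariant and sends $\tilde{\gamma}_{v}$ to $\tilde{\gamma}_{v}$, so $f_{\infty}^{-1}(A)$ is $J_{\infty}(T)$-invariant whenever $A$ is, and $F \circ f_{\infty}$ is $J_{\infty}(T)$-invariant whenever $F$ is. With these checks in place Yasuda's transformation rule applies and delivers the displayed identity. The main obstacle I anticipate is matching Yasuda's abstract change-of-variables statement with the explicit integration theory developed in Section \ref{motivic}, where integrals are presented as formal sums over lattice points via Theorem \ref{decomp} and Lemma \ref{orbitz}. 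To address this I would decompose both sides as sums indexed by $v \in |\triangle|\cap N = |\Sigma|\cap N$, apply Lemma \ref{orbitz} on each stack, and verify termwise that the exponents of $(uv)$ on the two sides differ precisely by $\ord K_{\mathcal{X}(\btriangle)/\mathcal{X}(\bSigma)}(\tilde{\gamma}_{v} \cdot J_{\infty}(T))$, using the shift-function computations of Section \ref{motivic} and the explicit formula for the relative canonical above.
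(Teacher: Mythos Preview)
The paper does not give a proof of this theorem at all: it is stated as a direct citation of Yasuda's general transformation rule \cite{YasMotivic}, specialised to the toric setting. The remarks preceding the statement (that $f$ is birational because it restricts to the identity on $T$, and proper because the coarse moduli map and the induced map of coarse moduli spaces are proper) are exactly the hypothesis checks you outline, so your first two paragraphs match the paper's implicit justification precisely.

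Your final paragraph, proposing a direct termwise verification via the orbit decomposition of Theorem~\ref{decomp} and the measure computation of Lemma~\ref{orbitz}, goes beyond what the paper does here. That route would yield an elementary proof of the identity in the toric case without invoking Yasuda's general machinery, and indeed the ingredients you list (the explicit formula for $\ord K_{\mathcal{X}(\btriangle)/\mathcal{X}(\bSigma)}$ and the matching of shift functions) are essentially the same ones the paper assembles in the proof of Corollary~\ref{fastball}. So your alternative is correct and arguably cleaner for the toric case; the paper simply chooses to quote Yasuda and then use the transformation rule as a black box to deduce the combinatorial corollary, rather than reprove it.
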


As a corollary, we deduce a geometric proof of the following result, which was proved using combinatorial methods in 
Proposition 2.13 of \cite{YoWeightI}.
If $\bSigma = (N, \Sigma, \{b_{i}\})$ is a stacky fan and $\lambda$ is a piecewise $\mathbb{Q}$-linear function on $|\Sigma|$, we fix the following notation. 
Let $\psi_{\Sigma} = \psi_{q_{*}K_{\mathcal{X}(\Sigma) } }$ be the piecewise $\mathbb{Q}$-linear function on $|\Sigma|$ satisfying $\psi_{\Sigma}(b_{i}) = 1$, 
and let   $\delta^{\lambda}_{\Sigma}(t)$ denote the weighted $\delta$-vector corresponding to $\lambda$.


\begin{cor}\label{fastball}
Let $N$ be a lattice of rank $d$ and let $\bSigma = (N, \Sigma, \{ b_{i} \})$ and $\btriangle = (N , \triangle, \{ b'_{j} \})$ be stacky fans such that $|\Sigma| = |\triangle|$. Let $\lambda$ be
a piecewise $\mathbb{Q}$-linear function with respect to $\Sigma$ satisfying $\lambda(b_{i}) > -1$ for every $b_{i}$, and  set 
$\lambda' = \lambda +  \psi_{\Sigma} - \psi_{\triangle}$.
If $\lambda'$ is piecewise $\mathbb{Q}$-linear with respect to $\triangle$ and satisfies $\lambda'(b'_{j}) > -1$ for every $b'_{j}$,
then 
$\delta^{\lambda}_{\Sigma}(t) = \delta^{\lambda'}_{\triangle}(t)$.
\end{cor}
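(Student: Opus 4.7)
The plan is to recast both weighted $\delta$-vectors as motivic integrals on a common stacky refinement and then use the transformation rule to identify them. By Theorem \ref{tropo}, we have $(uv)^{d} \delta^{\lambda}_{\Sigma}((uv)^{-1}) = \Gamma(\mathcal{X}(\bSigma), \mathcal{E})$ for the Kawamata log terminal pair with $\mathcal{E} = -\sum_{i} \lambda(b_{i}) \mathcal{D}_{i}$, and analogously $(uv)^{d} \delta^{\lambda'}_{\triangle}((uv)^{-1}) = \Gamma(\mathcal{X}(\btriangle), \mathcal{E}')$ for the pair with $\mathcal{E}' = -\sum_{j} \lambda'(b'_{j}) \mathcal{D}'_{j}$. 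It therefore suffices to prove $\Gamma(\mathcal{X}(\bSigma), \mathcal{E}) = \Gamma(\mathcal{X}(\btriangle), \mathcal{E}')$.

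I would first construct a common stacky refinement $(N, \Omega, \{b^{*}_{k}\})$ of both $\bSigma$ and $\btriangle$. Since $|\Sigma| = |\triangle|$, the underlying fans admit a common simplicial refinement $\Omega$, and on each new ray of $\Omega$ one selects $b^{*}_{k}$ to be a sufficiently divisible multiple of the primitive ray generator so that condition (\ref{hoy}) holds simultaneously with respect to both stacky fans. Write $\mathcal{X}_{\Omega}$ for the associated toric stack and $f \colon \mathcal{X}_{\Omega} \to \mathcal{X}(\bSigma)$, $g \colon \mathcal{X}_{\Omega} \to \mathcal{X}(\btriangle)$ for the induced proper birational morphisms.

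Applying the transformation rule (Theorem \ref{yap}) to $f$ with $F = \ord \mathcal{E}$ and $A = |\mathcal{J}_{\infty} \mathcal{X}(\bSigma)|$, and using the functorial identity $\ord \mathcal{E} \circ f_{\infty} = \ord f^{*} \mathcal{E}$, yields
\begin{equation*}
\Gamma(\mathcal{X}(\bSigma), \mathcal{E}) = \int_{|\mathcal{J}_{\infty} \mathcal{X}_{\Omega}|'} (uv)^{s_{\mathcal{X}_{\Omega}} + \ord (f^{*} \mathcal{E} - K_{\mathcal{X}_{\Omega}/\mathcal{X}(\bSigma)}) } d\mu_{\mathcal{X}_{\Omega}},
\end{equation*}
and an analogous expression for $\Gamma(\mathcal{X}(\btriangle), \mathcal{E}')$ with $g$ in place of $f$. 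It then suffices to show that the two $T$-invariant $\mathbb{Q}$-divisors
\begin{equation*}
f^{*} \mathcal{E} - K_{\mathcal{X}_{\Omega}/\mathcal{X}(\bSigma)} \quad \text{and} \quad g^{*} \mathcal{E}' - K_{\mathcal{X}_{\Omega}/\mathcal{X}(\btriangle)}
\end{equation*}
agree on $\mathcal{X}_{\Omega}$. Translating to associated piecewise $\mathbb{Q}$-linear functions via $\psi_{q_{*}(-)}$ — using that toric pullback preserves the function and that $K_{\mathcal{X}_{\Omega}/\mathcal{X}(\bSigma)}$ corresponds to $\psi_{\Omega} - \psi_{\Sigma}$ (and similarly for $\btriangle$) — the desired identity of divisors becomes $\lambda + \psi_{\Sigma} - \psi_{\Omega} = \lambda' + \psi_{\triangle} - \psi_{\Omega}$, which is precisely the hypothesis $\lambda' = \lambda + \psi_{\Sigma} - \psi_{\triangle}$.

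The main obstacle I anticipate is verifying that a common stacky refinement exists satisfying condition (\ref{hoy}) with respect to both $\bSigma$ and $\btriangle$, and confirming that the transformation rule can be meaningfully applied in this form even when the pullback-minus-discrepancy divisors $f^{*} \mathcal{E} - K_{\mathcal{X}_{\Omega}/\mathcal{X}(\bSigma)}$ need not themselves define Kawamata log terminal pairs. The conceptual payoff is that the combinatorial hypothesis relating $\lambda$ and $\lambda'$ is precisely the statement that the two discrepancy-adjusted pullback divisors coincide on the common refinement, so the equality of weighted $\delta$-vectors is a direct geometric consequence of the transformation rule.
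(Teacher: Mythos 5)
Your proof is correct and follows essentially the same strategy as the paper: pass to a common stacky refinement, recast each weighted $\delta$-vector as a motivic integral via Theorem~\ref{tropo}, and invoke the transformation rule of Theorem~\ref{yap}. The paper frames this slightly differently — it first reduces to the case where $\btriangle$ refines $\bSigma$, then applies the transformation rule once with $F = \ord\mathcal{E}$ and reads off the two key identities $(\ord\mathcal{E}\circ f_{\infty})(\tilde\gamma_w) = -\lambda(w)$ and $\ord K_{\mathcal{X}(\triangle)/\mathcal{X}(\Sigma)}(\tilde\gamma_w) = \psi_{\Sigma}(w)-\psi_{\triangle}(w)$ directly from Lemmas~\ref{order} and~\ref{light}, rather than phrasing them as an equality of pullback-minus-discrepancy divisors on the common refinement — but that is a difference in bookkeeping, not in substance. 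Your concern about the resulting integrand failing to come from a KLT pair is not an obstacle: Theorem~\ref{yap} is stated for an arbitrary $J_{\infty}(T)$-invariant function $F$, and the finiteness needed is guaranteed by Yasuda's result that $\Gamma(\mathcal{X},\mathcal{E})$ converges; you would just want to verify $\ord\mathcal{E}\circ f_{\infty} = \ord f^{*}\mathcal{E}$, which follows immediately from Lemma~\ref{light} together with the fact that toric pullback preserves the associated piecewise $\mathbb{Q}$-linear function.
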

\begin{proof}
Let $\tilde{\Sigma}$ be a common refinement of $\Sigma$ and $\triangle$. We can choose lattice points $\{\tilde{b}_{i}\}$ on the rays of $\tilde{\Sigma}$ so that 
$\tilde{\bSigma} = (N, \tilde{\Sigma}, \{\tilde{b}_{i}'\})$ is a stacky fan refining $\bSigma$ and $\btriangle$. Hence we can reduce to the case when $\btriangle$ refines $\bSigma$. 
In this case, 
consider the corresponding birational morphism 
$f: \mathcal{X}(\btriangle) \rightarrow \mathcal{X}(\bSigma)$.
By Lemma \ref{order}, there is a Kawamata log terminal pair $(\mathcal{X}(\bSigma), \mathcal{E})$ such that 
$\ord \mathcal{E}( \tilde{\gamma}_{w} \cdot J_{\infty}(T) ) = -\lambda(w)$.
It follows from Lemma \ref{light}  that 
$(\ord \mathcal{E} \circ f_{\infty} )( \tilde{\gamma}_{w} \cdot J_{\infty}(T) ) = -\lambda(w)$ and
Lemma \ref{order} and Lemma \ref{light} imply that
$\ord K_{\mathcal{X}(\triangle)/\mathcal{X}(\Sigma)}   ( \tilde{\gamma}_{w} \cdot J_{\infty}(T) ) = \psi_{\Sigma}(w)  - \psi_{\triangle}(w)$.
The result now follows from Theorem \ref{tropo} and Theorem \ref{yap}, with $F = \ord \mathcal{E}$ and $A = |\mathcal{J}_{\infty}\mathcal{X}(\bSigma)|$.
\end{proof}

\section{Remarks}\label{remedy}

More generally, we can replace $N$ by a finitely generated abelian group of rank $d$. All the results go through with minor modifications. 
We mention the local construction of the toric stack \cite{BCSOrbifold}.
Let $\bar{N}$ be the lattice given by the image of $N$ in 
$N_{\mathbb{R}}$ and  for each $v$ in $N$, let $\bar{v}$ denote the image of $v$ in $\bar{N}$.
Let $\Sigma$ be a complete, simplicial, rational fan in $N_{\mathbb{R}}$ and 
let  $\bar{v}_{1}, \ldots, \bar{v}_{r}$ be the primitive integer generators of $\Sigma$ in $\bar{N}$. 
Fix elements $b_{1}, \ldots , b_{r}$ in $N$ such that $\bar{b}_{i} = a_{i}\bar{v}_{i}$, for some positive integer $a_{i}$.
The data $\bSigma = (N, \Sigma, \{ b_{i} \})$ is a \emph{stacky fan}. 
For each maximal cone $\sigma$ of $\Sigma$, let $N_{\sigma}$ denote the subgroup of $N$ generated by $\{ b_{i} \, | \, \rho_{i} 
\subseteq \sigma \}$. We obtain a homomorphism of finite groups
$N(\sigma) = N/N_{\sigma} \rightarrow \bar{N}(\sigma) = \bar{N}/\bar{N}_{\sigma}$.
Composing with the injection 
$\bar{N}(\sigma) \rightarrow (\mathbb{C}^{*})^{d}$ from Section \ref{tstack} gives a homomorphism
$N(\sigma) \rightarrow \bar{N}(\sigma) \rightarrow (\mathbb{C}^{*})^{d}$, and 
$\mathcal{X}(\bsigma) = [\mathbb{A}^{d}/ N(\sigma)]$.

Using this more general setup, one can apply Theorem \ref{decomp} to the $T$-invariant closed substacks of $\mathcal{X}$ to give a decomposition of $|\mathcal{J}_{\infty}\mathcal{X}|$
into $J_{\infty}(T)$-orbits.

\bibliographystyle{amsplain}
\bibliography{alan}

\end{document}